\newtheorem{theorem}{Theorem}[section]
\newtheorem{corollary}[theorem]{Corollary}
\newtheorem{lemma}[theorem]{Lemma}
\newtheorem{proposition}[theorem]{Proposition}
\theoremstyle{definition}
\newtheorem{definition}[theorem]{Definition}
\theoremstyle{remark}
\newtheorem{remark}[theorem]{Remark}
\numberwithin{equation}{section}
\begin{document}

\title[Growth and Distortion  Theorems]{The Growth and Distortion  Theorems for Slice Monogenic Functions}

\thanks{This work was supported by the NNSF  of China (11371337).}

\author[G. B. Ren]{Guangbin Ren}
\address{Guangbin Ren, School of Mathematical Sciences, University of Science and
Technology of China, Hefei 230026, China}
\email{rengb$\symbol{64}$ustc.edu.cn}

\author[X. P. Wang]{Xieping Wang}
\address{Xieping Wang, School of Mathematical Sciences, University of Science and
Technology of China, Hefei 230026,
China}
\email{pwx$\symbol{64}$mail.ustc.edu.cn}

\keywords{Quaternions, Clifford algebra, slice regular (slice monogenic) functions, growth and distortion theorems, Koebe  one-quarter theorem.}
\subjclass[2010]{Primary 30G35; Secondary 30C45}

\begin{abstract}
The  sharp growth and distortion theorems  are established for   slice monogenic  extensions of univalent functions on the unit disc $\mathbb D\subset \mathbb C$ in the setting of Clifford algebras,  based on a new convex combination  identity.
The analogous results are also valid  in the   quaternionic setting for slice regular functions and we can even prove the Koebe type one-quarter theorem in this case.
Our growth and distortion theorems  for  slice regular  (slice monogenic) extensions to higher dimensions  of univalent holomorphic
functions  hold without extra geometric assumptions,
   in contrast to the setting of several complex variables in which the
growth and distortion theorems fail in general and hold only for some subclasses
with  the starlike or
convex assumption.

\end{abstract}
\maketitle

\section{Introduction}
In geometric function theory of holomorphic functions of one complex variable, the following well-known growth and distortion theorems  (cf. \cite{Duren, GG})   remark the beginning of the systematic  study of univalent functions.
\begin{theorem}[Growth and Distortion Theorems]\label{Th:dg-theorem}
Let $F$ be a univalent function on the open unit disk
$\mathbb D=\big\{z\in \mathbb C :|z| < 1\big\}$ such that $F(0)=0$ and $F'(0)=1$. Then for each $z\in\mathbb D$, the following inequalities hold:
\begin{eqnarray}\label{eq:1}
\frac{|z|}{(1+|z|)^2}\leq |F(z)|\leq \frac{|z|}{(1-|z|)^2};
\end{eqnarray}
\begin{eqnarray}\label{eq:2}
\frac{1-|z|}{(1+|z|)^3}\leq |F'(z)|\leq \frac{1+|z|}{(1-|z|)^3};
\end{eqnarray}

\begin{eqnarray}\label{eq:3}
\frac{1-|z|}{1+|z|}\leq \bigg|\frac{zF'(z)}{F(z)}\bigg|\leq \frac{1+|z|}{1-|z|}.
\end{eqnarray}
Moreover, equality holds for one of these six inequalities at some point $z_0\in \mathbb D\setminus\{0\}$ if and only if $F$ is a rotation of the Koebe function, i.e. $$F(z)=\frac{z}{(1-e^{i\theta} z)^2}\ ,\qquad \forall\,\, z\in \mathbb D,$$ for some $ \theta \in \mathbb R$.
\end{theorem}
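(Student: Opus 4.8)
The plan is to reduce all six inequalities to a single coefficient estimate, Bieberbach's inequality $\abs{a_2}\le 2$ for the normalized class $S=\{F: F\text{ univalent on }\mathbb D,\ F(0)=0,\ F'(0)=1\}$, and then to exploit the invariance of $S$ under the disc-automorphism (Koebe) transform. Writing $F(z)=z+a_2z^2+\cdots$, I would first prove $\abs{a_2}\le2$ by the classical area-theorem argument: pass to the odd square-root transform $f(z)=\sqrt{F(z^2)}=z+\tfrac{a_2}{2}z^3+\cdots\in S$, invert to $g(z)=1/f(1/z)$, univalent on the exterior $\{\abs{z}>1\}$, and apply the area theorem to bound the relevant exterior coefficient. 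Throughout I would record the equality case, since equality in the area theorem forces all but one exterior coefficient to vanish, which ultimately characterizes $F$ as a rotation of the Koebe function $k(z)=z/(1-z)^2$.

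Next I would introduce, for each $\zeta\in\mathbb D$, the Koebe transform
\[
F_\zeta(z)=\frac{F\!\left(\dfrac{z+\zeta}{1+\bar\zeta z}\right)-F(\zeta)}{(1-\abs{\zeta}^2)F'(\zeta)}\in S,
\]
compute its second Taylor coefficient, and feed it into $\abs{a_2}\le2$ to obtain the master inequality
\[
\left|(1-\abs{\zeta}^2)\frac{F''(\zeta)}{F'(\zeta)}-2\bar\zeta\right|\le 4 .
\]
Putting $\zeta=re^{i\theta}$, multiplying through by $e^{i\theta}/(1-r^2)$ and taking real parts converts this into the differential inequality
\[
\frac{2r-4}{1-r^2}\le\frac{\partial}{\partial r}\log\abs{F'(re^{i\theta})}\le\frac{2r+4}{1-r^2},
\]
and integrating from $0$ to $r$ (using $\abs{F'(0)}=1$) yields the distortion estimate \eqref{eq:2}.

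From \eqref{eq:2} the growth estimate \eqref{eq:1} follows. For the upper bound I would integrate $\abs{F'}$ along the radial segment, giving $\abs{F(z)}\le\int_0^{r}\frac{1+t}{(1-t)^3}\,dt=\frac{r}{(1-r)^2}$. For the lower bound I would invoke the Koebe one-quarter theorem (a consequence of $\abs{a_2}\le2$): if $\abs{F(z)}\ge\tfrac14$ the estimate is automatic since $\frac{r}{(1+r)^2}<\tfrac14$, and otherwise the segment $[0,F(z)]$ lies in $F(\mathbb D)$, so pulling it back by $F^{-1}$ and applying the lower distortion bound along this path gives $\abs{F(z)}\ge\int_0^{r}\frac{1-t}{(1+t)^3}\,dt=\frac{r}{(1+r)^2}$. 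The ratio estimate \eqref{eq:3} then falls out of the growth theorem applied to the Koebe transform itself: since $\frac{z+\zeta}{1+\bar\zeta z}$ carries $z=-\zeta$ to $0$, one has $F_\zeta(-\zeta)=-\frac{F(\zeta)}{(1-\abs{\zeta}^2)F'(\zeta)}$, and applying \eqref{eq:1} to $F_\zeta\in S$ at the point $-\zeta$ of modulus $r$, then taking reciprocals and multiplying by $r/(1-r^2)$, produces exactly $\frac{1-r}{1+r}\le\left|\frac{\zeta F'(\zeta)}{F(\zeta)}\right|\le\frac{1+r}{1-r}$.

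The step I expect to be the main obstacle is the rigidity statement. The chain of integrations is where care is required: equality in any one of the six inequalities at a single point $z_0\neq0$ must be propagated backward along the relevant ray or pulled-back path to force equality identically, hence equality $\abs{a_2}=2$ in Bieberbach's theorem for the appropriate translate of $F$. Since the area theorem attains equality only when the omitted set has zero area, this collapses the extremal configuration to a single rotation $F(z)=z/(1-e^{i\theta}z)^2$; tracking that this rotation parameter is the same one dictated by the point $z_0$ in each of the six cases is the delicate bookkeeping at the heart of the sharpness claim.
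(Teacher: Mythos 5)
This theorem is the classical growth--distortion theorem, which the paper itself does not prove: it is stated with a citation to Duren's \emph{Univalent Functions}, and your proposal reconstructs precisely the standard proof given there (area theorem $\Rightarrow$ Bieberbach's $\abs{a_2}\le 2$ with its equality case, the Koebe transform and master inequality, radial integration for distortion, integration along the pulled-back segment for the lower growth bound, and the transform evaluated at $-\zeta$ for the ratio bounds). Your outline, including the strategy of propagating equality back to $\abs{a_2}=2$ for the rigidity statement, is correct and coincides with the approach of the paper's cited source, so there is nothing substantive to compare.
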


The extension of geometric function theory to higher dimensions was suggested by H. Cartan \cite{Ca} in 1933.
But, the first meaningful   result  was only made in 1991 by Barnard, Fitzgerald and Gong \cite{BFG}.
Since then, the geometric function theory in several complex variables  has been  extensively studied, see \cite{Gong,GG} and the references therein. In particular, the growth theorem holds  for starlike  mappings  on starlike  circular domains  \cite{LR2},
and for convex   mappings  on convex  circular domains \cite{LR1}.

However, as far as we know, nearly nothing has been done about  the corresponding  theory  for other  classes of functions,  such as the classical regular (monogenic) functions in the sense of Cauchy-Fueter and the recently introduced slice regular (slice monogenic) functions,  perhaps due to the failure  of closeness  under multiplication and composition brought from non-commutativity of the underlying algebras on which these functions are defined.

In this paper, we  shall focus on   slice regular and slice monogenic functions and  aim  to generalize Theorem \ref{Th:dg-theorem} to the noncommutative setting for slice regular and slice monogenic extensions of univalent functions on the unit disc $\mathbb D\subset \mathbb C$. The theory of slice regular functions of one quaternionic variable was initiated recently by Gentili and Struppa \cite{GS1, GS2}, and was also extended by the same authors to octonions in \cite{GS50} for octonionic slice regular functions.
The related theory of slice monogenic functions on domains in the paravector space $\mathbb R^{n+1}$ with values in the Clifford algebra $\mathbb R_n$ was introduced in \cite{Co, Co1}. To have a more complete insight, we refer the reader to the monographs \cite{GSS, Co2} and the references therein. These function theories were also unified and generalized in \cite{Ghiloni1} by means of the concept of slice functions on the so-called quadratic cone of a real alternative *-algebra, based on a  slight modification of a well-known construction due to Fueter. The theory of slice regular functions on  real alternative *-algebras  is by now well-developed through a series of papers mainly due to Ghiloni and Perotti after their seminal work \cite{Ghiloni1}.
It is also well worth mentioning that this recently introduced theory of slice regular (slice monogenic) functions is significantly different from the more classical theory of regular (monogenic) functions in the sense of Cauchy-Fueter (cf. \cite{BDS, CSSS, GHS}), and has  elegant  applications to the functional calculus for noncommutative operators \cite{Co2}, to Schur analysis \cite{ACS},  and to the construction and classification of orthogonal complex structures on dense open subsets of $\mathbb R^4\simeq \mathbb H$ \cite{GSS2014}.

We are now in a position to state one of our main results in the case of Clifford algebra $\mathbb R_n$ for  slice monogenic extensions to  the open unit ball
$$\mathbb B:=\Big\{x\in\mathbb R^{n+1}: |x|<1\Big\}$$
of univalent  functions on the unit disc $\mathbb D\subset \mathbb C$.

\begin{theorem}\label{main-thm-Clifford} Let  $F: \mathbb D\rightarrow \mathbb C$ be a   univalent function such that $F(0)=0$ and $F'(0)=1$, and let
$f:\mathbb B\rightarrow \mathbb R_n$ be the slice monogenic extension of $F$. Then for each $x\in\mathbb B,$ the following inequalities hold:
\begin{eqnarray}\label{eq:12}
\frac{|x|}{(1+|x|)^2}\leq |f(x)|\leq \frac{|x|}{(1-|x|)^2};
\end{eqnarray}
\begin{eqnarray}
\frac{1-|x|}{(1+|x|)^3}\leq |f'(x)|\leq \frac{1+|x|}{(1-|x|)^3} ;
\end{eqnarray}\label{eq:11}
\begin{eqnarray}\label{eq:12}
\frac{1-|x|}{1+|x|}\leq \big|xf'(x)\ast f^{-\ast}(x)\big|\leq \frac{1+|x|}{1-|x|}.
\end{eqnarray}
Moreover, equality holds for one of these six inequalities at some point $x_0\in \mathbb B\setminus\{0\}$ if and only if
$$f(x)=x(1-xe^{i\theta})^{-\ast2},\qquad\forall \,\,x \in \mathbb B$$
for some $ \theta \in \mathbb R$.
\end{theorem}

Although Theorem \ref{main-thm-Clifford} coincides in form with Theorem \ref{Th:dg-theorem},   the classical approach to Theorem \ref{Th:dg-theorem} can not be directly applied in this new  case of Clifford algebra $\mathbb R_n$, since  there lacks a fruitful theory of compositions for  slice monogenic functions.   We shall reduce Theorem \ref{main-thm-Clifford} to Theorem \ref{Th:dg-theorem} via  a new convex combination  identity (see (\ref{eq: modulus12}) below).  We remark that in contrast to the setting of several complex variables in which the growth and distortion theorems fail to hold in general \cite{Ca} and can only be restricted to the starlike or convex subclasses, our result for  slice monogenic extensions of univalent   functions holds without  extra geometric assumptions.
This new phenomenon is in  a certain sense related to the rigidity of the functions under consideration.  There is a significant difference existing between slice monogenic functions and holomorphic functions of several complex variables, although they are both the generalizations in higher dimensions of holomorphic functions of one complex variable. The former are  closer to holomorphic functions of one complex variable, and each of them can be completely determined by its values on a set that lies in a complex slice and has an accumulation point in its domain of definition. However, this is not the case for the latter, each of which is not always determined by its values on a complex submanifold of positive codimensions in its domain of definition. From this perspective, we realize that  holomorphic functions of several complex variables are less rigid than slice monogenic functions so that certain extra geometric assumptions such as starlikeness and  convexity  are naturally present in the geometric function theory in several complex variables.

A result analogous to  Theorem \ref{main-thm-Clifford} also holds in the setting of quaternions (see  Theorem \ref{th:DG-theorem153} below). As an application, we can prove a covering theorem, that is the so-called Koebe type one-quarter theorem (see Theorem \ref{th:Koebe-theorem} below,  a generalization of \cite[Theorem 3.11 (1)]{GGCO}), with the help of the open mapping theorem, which is by now  known to hold   only for slice regular functions defined on symmetric slice domains in $\mathbb H$ with  values in $\mathbb H$ rather than slice monogenic functions defined on symmetric slice domains  in  paravector space $\mathbb R^{n+1}$ with values in Clifford algebra $\mathbb R_n$.

We now describe in  more detail the structure of the paper. In Section \ref{Preliminaries}, we set up basic notation and give some preliminary results. In Section \ref{GDTs for slice monogenicity}, we first prove in Proposition \ref{eq:C-version} a general formula to express the squared norm of a slice monogenic function defined on a symmetric slice domain in the paravector space $\mathbb R^{n+1}$, in terms of the values of the function at two conjugate points on some fixed slice of the domain. We then provide in Lemma \ref{eq:norm} for slice monogenic functions that preserve one slice the aforementioned convex combination identity, which is the key ingredient of  proving Theorem  \ref{main-thm-Clifford}. Section \ref{GDTs for slice regularity} is devoted to the detailed proofs of the analogous results and the Koebe type one-quarter theorem (Theorem \ref{th:Koebe-theorem}) for slice regular functions in the quaternionic setting. Thanks to the speciality of quaternions, we can also provide in Corollary \ref{eq:modulus relation151} a sufficient and necessary condition under which the aforementioned convex combination identity holds identically. Finally, Section \ref{Concluding remarks} comes a concluding remark and an open question connected with the  subject of the present paper.

\section{Preliminaries}\label{Preliminaries}

We recall in this section some necessary definitions and preliminary results on real Clifford algebras and slice monogenic functions.
To have a more complete insight, we refer the reader to the monograph \cite{Co2}.

The real Clifford algebra $\mathbb R_n=\textrm{Cl}_{0,n}$ is an associative algebra over $\mathbb R$ generated by $n$ basis elements $e_1,e_2,\ldots,e_n$, subject to the relations
$$e_ie_j+e_je_i=-2\delta_{ij},\quad i,j=1,2,\ldots,n.$$
As a real vector space, $\mathbb R_n$ has dimension $2^n$. Each  element $b$ in $\mathbb R_n$ can be represented \textit{uniquely} as $$b=\sum\limits_{A}b_Ae_A, $$
where  $b_A \in \mathbb{R}$, $e_{0}=1$, $e_{A}:=e_{h_1}e_{h_2}\ldots e_{h_r}$, and
 $A=h_1\ldots h_r$ is a multi-index such that $1\leq h_{1}<\cdots<h_{r}\leq n$.
The real number $b_0$ is called the \textit{scalar} part of $b$ and is denoted by ${\rm{Sc}}(b)$ as usual. The \textit{Clifford conjugate} of each generator $e_i$, $i=1, 2,\ldots, n,$ is defined to be $\bar{e}_i=-e_i$, and thus extends to each $e_A$ by setting  $\bar{e}_A:=\bar{e}_{h_{r}}\bar{e}_{h_{r-1}}\ldots \bar{e}_{h_{1}}=(-1)^re_{h_{r}}e_{h_{r-1}}\ldots e_{h_{1}}=(-1)^{r(r+1)/2}e_A,$ and further extends by linearity to each element $b=\sum_{A}b_Ae_A\in \mathbb R_n$ so that
 $$\bar{b}=\sum\limits_{A}b_A\bar{e}_A.$$ Therefore, the Clifford  conjugate is an anti-automorphism of $\mathbb R_n$, i.e. $\overline{ab}=\bar{b}\bar{a}$ for any $a, b\in \mathbb R_n$.
 Moreover, the Euclidean inner product on $\mathbb R_n\simeq\mathbb R^{2^n}$ is given by
\begin{equation}\label{inner-product-definition001}
\langle a,b\rangle:={\rm{Sc}}(a\bar{b})=\sum\limits_{A}a_Ab_A
\end{equation}
for any $a=\sum\limits_{A}a_Ae_A$, $b=\sum\limits_{A}b_Ae_A\in \mathbb R_n$, then it follows from the simple identity
$$\langle a,b\rangle=\frac12\big(|a+b|^2-|a|^2-|b|^2\big)$$
that
\begin{equation}\label{99}
\langle a,b\rangle=\langle b,a\rangle=\langle \bar{a},\bar{b}\rangle=\langle \bar{b},\bar{a}\rangle.
\end{equation}
It is worth remarking here that for $\mathbb R_n(n\geq 3)$ the multiplicative property of the Euclidean norm fails in general, and holds only for some special cases (see \cite[Proposition 2.1.17]{Co2} or \cite[Theorem 3.14 (ii)]{GHS}. In particular, it holds that
\begin{equation}\label{mult-norm}
|ab|=|ba|=|a||b|
\end{equation}
whenever one of $a$ and $b$ is a paravector (see below for this definition).
 This simple fact will be useful for our argument in Section \ref{GDTs for slice monogenicity}.

For convenience, some specific elements in $\mathbb R_n$ can be identified with vectors in the
Euclidean space $\mathbb R^{n+1}$: an element $(x_1, x_2,\ldots, x_n)\in \mathbb R^n$ will be identified with a so-called \textit{1-vector} in the Clifford algebra $\mathbb R_n$ through the map $(x_1, x_2,\ldots, x_n)\mapsto \underline{x}=x_1e_1+e_2x_2+\cdots+x_ne_n$; an element $(x_0, x_1,\ldots, x_n)\in \mathbb R^{n+1}$ will be identified with $x=x_0+\underline{x}=x_0+x_1e_1+\cdots+x_ne_n$, which is called a \textit{paravector}. Now for any  two 1-vectors $x$, $y\in \mathbb R^{n}$, the Euclidean inner product becomes
$$\langle x, y\rangle
={\rm{Sc}}(x\bar{y})
=-\frac12(xy+yx),$$
and consequently,
$$xy
=-\langle x, y\rangle+x\wedge y,$$
where
$$x\wedge y:=\frac12(xy-yx)$$
 is called the  \textit{outer product} (cf. \cite[p.4]{BDS}, \cite[p.58]{GHS}) or \textit{wedge product} (cf. \cite[p.218, Definition 4.1.9]{CSSS}, \cite[p.21]{Co2}) of $x$ and $y$. It is noteworthy here that \textit{in general the operator $\wedge$ is a mapping from $\mathbb R^n\times\mathbb R^n$ to $\mathbb R_n$, not to $\mathbb R^n$}.
Furthermore, under the identifications above, a vector $x$ in $\mathbb R^{n+1}$ can be taken as a  Clifford number
 $$x=x_0+\sum_{i=1}^{n}x_ie_i$$
so that it has inverse
$$x^{-1}=\frac{\bar x}{|x|^2},$$
where $\bar x$ is the \emph{conjugate} of $x$ given by  $\bar{x}=x_0-\sum_{i=1}^{n}x_ie_i$ and the  norm of $x$ is induced by the inner product given above, i.e. $|x|=\langle x,x\rangle^{\frac12}$. Every $x=x_0+x_1e_1+\cdots+x_ne_n\in\mathbb R^{n+1}$ is composed by the \textit{scalar} part ${\rm{Sc}}(x)=x_0\in\mathbb R$ and the \textit{vector} part $\underline{x}=x_1e_1+\cdots+x_ne_n\in\mathbb R^{n}$, and  it can be expressed alternatively  as $x=u+Iv$, where $u$, $v\in \mathbb R$ and
$$I=\frac {\underline{x}}{|\underline{x}|}$$
if $\underline{x}\neq 0$, otherwise  we take $I$ arbitrarily in $\mathbb R^n$ such that $I^2=-1$.
Then $I $ is an element of the unit $(n-1)$-sphere of   $1$-vectors in $\mathbb R^n$,
$$\mathbb S=\Big\{\underline{x}=x_1e_1+\cdots+x_ne_n\in\mathbb R^{n}:x_1^2+\cdots+x_n^2=1\Big\}.$$
For every $I \in \mathbb S $ we will denote by $\mathbb C_I$ the plane $ \mathbb R \oplus I\mathbb R $, isomorphic to $ \mathbb C$, and, if $U \subseteq \mathbb R^{n+1}$, by $U_I$ the intersection $ U \cap \mathbb C_I $. Also, for $R>0$, we will denote the open ball of $\mathbb R^{n+1}$ centred at the origin with radius $R$ by
$$B(0,R)=\Big\{x \in \mathbb R^{n+1}:|x|<R\Big\}.$$

We can now recall the definition of slice monogenicity.
\begin{definition} \label{de: regular} Let $U$ be a domain in $\mathbb R^{n+1}$. A function $f :U \rightarrow \mathbb R_n$ is called \emph{slice monogenic} if, for all $ I \in \mathbb S$, its restriction $f_I$ to $U_I$ is \emph{holomorphic}, i.e.  it has continuous partial derivatives and satisfies
$$\bar{\partial}_I f(u+vI):=\frac{1}{2}\left(\frac{\partial}{\partial u}+I\frac{\partial}{\partial v}\right)f_I (u+vI)=0$$
for all $u+vI\in U_I $.
 \end{definition}

The natural domains of definition in the theory of slice monogenic functions are  symmetric slice domains.

\begin{definition} \label{de: domain}
Let $U$ be a domain in $\mathbb R^{n+1}$.
\begin{enumerate}[leftmargin=1.7pc, parsep=4pt,label=(\roman*)]
\item $U$ is called a \textit{slice domain}  if it intersects the real axis and if  for each $I \in \mathbb S $, $U_I$  is a domain in $ \mathbb C_I $.

\item  $U$ is called an \textit{axially symmetric domain} if for every point  $u + vI \in U$, with  $u,v \in \mathbb R $ and $I\in \mathbb S$, the entire sphere $u + v\mathbb S$ is contained in $U$.
\end{enumerate}
\end{definition}

A domain in $\mathbb R^{n+1}$ is called a \textit{symmetric slice domain} if it is not only a slice domain, but also an axially symmetric domain. By the very definition, an open ball $B(0,R)$ is  a typical  symmetric slice domain. From now on, we will focus mainly on slice monogenic functions on $B(0,R)$.
In most cases, the following results hold, with appropriate changes, for symmetric slice domains more general than open balls of the type $B(0,R)$.
For slice monogenic functions a natural definition of derivative is given by the following.
\begin{definition} \label{de: derivative}
Let $f :B(0,R) \rightarrow \mathbb R_n$  be a slice monogenic  function. The \emph{slice derivative} of $f$ is defined to be
$$\partial_I f(u+vI):=\frac{1}{2}\left(\frac{\partial}{\partial u}-I\frac{\partial}{\partial v}\right)f_I (u+vI).$$
 \end{definition}
Notice that the operators $\partial_I$ and $\bar{\partial}_I $ commute, and
$$\partial_I f(u+vI)=\frac{\partial}{\partial u}f(u+vI)$$ holds for slice monogenic functions. Therefore, the
slice derivative of a slice monogenic function is still slice monogenic  so that we can iterate the differentiation to obtain the $k$-th
slice derivative
$$\partial^{k}_I f(u+vI)=\bigg(\frac{\partial}{\partial u}\bigg)^{k} f(u+vI),\quad\,\forall \,\, k\in \mathbb N. $$
In what follows, for the sake of simplicity, we will direct denote the $k$-th slice derivative $\partial^{k}_I f$ by $f^{(k)}$ for every $k\in \mathbb N$.

As shown in \cite{Co}, a paravector power series $\sum_{k=0}^{\infty}x^k a_k$ with $\{a_k\}_{k \in \mathbb N} \subset \mathbb R_n$ defines a slice monogenic function in its domain of convergence, which proves to be an open ball $B(0,R)$ with $R$ equal to the radius of convergence of the power series. The converse result is also true.
\begin{theorem}\label{eq:Taylor}
A function f is slice monogenic on $B = B(0,R) $ if and only if $f$ has a power series expansion
$$f(x)=\sum\limits_{k=0}^{\infty}x^k a_k\quad with \quad a_k=\frac{f^{(k)}(0)}{k!}.$$
\end{theorem}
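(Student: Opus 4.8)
The plan is to take the forward implication as given---by \cite{Co}, any paravector power series $\sum_{k=0}^{\infty}x^{k}a_{k}$ is slice monogenic on the ball $B(0,R)$ whose radius $R$ equals the radius of convergence---and to concentrate on the converse: a slice monogenic $f$ on $B=B(0,R)$ is recovered by such a series with $a_{k}=f^{(k)}(0)/k!$. The idea is to read off the coefficients slice by slice from the classical vector-valued Taylor theorem and then verify that one single paravector series works on all of $B$ at once.

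First I would fix $I\in\mathbb{S}$ and study the restriction $f_{I}:B_{I}\to\mathbb{R}_{n}$. Regarding $\mathbb{R}_{n}$ as a complex vector space under left multiplication by $I$ (legitimate since $I^{2}=-1$), the holomorphy condition $\bar{\partial}_{I}f=\frac12(\partial_{u}+I\partial_{v})f_{I}=0$ from Definition \ref{de: regular} is exactly the Cauchy--Riemann equation for a holomorphic map from the disc $B_{I}\subset\mathbb{C}_{I}$ into this complex vector space. The classical Taylor theorem for vector-valued holomorphic functions then yields a convergent expansion $f_{I}(z)=\sum_{k=0}^{\infty}z^{k}a_{k}^{(I)}$ on $B_{I}$, where the complex scalar $z^{k}\in\mathbb{C}_{I}$ multiplies $a_{k}^{(I)}\in\mathbb{R}_{n}$ on the left and $a_{k}^{(I)}=\frac{1}{k!}\partial_{I}^{k}f(0)$; since the complex derivative for this structure is precisely $\partial_{I}f=\partial f/\partial u$, this matches the slice-derivative normalization of Definition \ref{de: derivative}. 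As the expansion converges throughout the disc $B_{I}$ of radius $R$, the root test gives that the radius of convergence of the corresponding paravector series is at least $R$.

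The key observation is that these coefficients are independent of the chosen slice: evaluated at the real point $0$, which lies on every $\mathbb{C}_{I}$, one has $\partial_{I}^{k}f(0)=\partial^{k}f/\partial u^{k}(0)$, a quantity computed purely in the common real direction. Hence $a_{k}^{(I)}=f^{(k)}(0)/k!=:a_{k}$ does not depend on $I$. Consequently $g(x):=\sum_{k=0}^{\infty}x^{k}a_{k}$ is a single paravector power series with radius of convergence $\geq R$, which by the forward implication is slice monogenic on $B$, and by construction $g_{I}=f_{I}$ on every slice $B_{I}$. Since $B=\bigcup_{I\in\mathbb{S}}B_{I}$, we conclude $f\equiv g$ on $B$, which completes the proof.

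The main obstacle I anticipate is the first step: one must correctly frame $f_{I}$ as a vector-valued holomorphic function so that the complex structure (left multiplication by $I$) is compatible with the desired order in $x^{k}a_{k}$---powers on the left, Clifford coefficients on the right---and so that the operator $\bar{\partial}_{I}$ really is the holomorphy condition for that structure. The remaining points are bookkeeping, but the slice-independence of the $a_{k}$, forced by evaluating the derivatives at the real point $0$, is precisely what allows a single global series to replace the a priori slice-dependent expansions.
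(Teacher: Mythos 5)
Your proof is correct, but note that the paper never proves Theorem \ref{eq:Taylor} at all: it is quoted as a known preliminary from \cite{Co}, so the comparison must be with the standard argument in that reference and with the tools the paper does provide. Your route is essentially that standard argument in a slightly different packaging. Where \cite{Co} (and this paper's own toolkit) would invoke the Splitting Lemma (Lemma \ref{eq:Splitting}) to write $f_I=\sum_{A}F_A I_A$ with each $F_A: B_I\to\mathbb C_I$ holomorphic and then expand the $F_A$ componentwise, you instead observe that left multiplication by $I$ is a genuine complex structure on $\mathbb R_n$ (this uses $I^2=-1$ together with associativity, so that $I(Ib)=-b$) and that $\bar\partial_I$ is exactly the Cauchy--Riemann operator for that structure; the two devices are equivalent, since choosing a complex basis $\{I_A\}$ of $\mathbb R_n$ with this structure turns your vector-valued Taylor theorem into the componentwise expansions of the $F_A$, and your framing has the advantage of making automatic the required order of factors (powers $z^k$ acting on the left, Clifford coefficients on the right). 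The crux of the converse, namely that the slice-wise coefficients $a_k^{(I)}=\frac{1}{k!}\partial_I^k f(0)$ do not depend on $I$, you handle correctly and this is the same mechanism as in the literature: since $\partial_I f=\partial f/\partial u$ for slice monogenic functions, hence $\partial_I^k f=\partial^k f/\partial u^k$, the value at the real point $0$ is determined by the restriction of $f$ to the real axis, which is common to all slices. The remaining bookkeeping is also sound: $x^k$ lies in $\mathbb C_{I_x}$ and is a paravector, so by (\ref{mult-norm}) one has $|x^k a_k|=|x|^k|a_k|$ and the paravector series inherits radius of convergence at least $R$; it is slice monogenic by the forward implication you assumed from \cite{Co}; and it agrees with $f$ on every slice, hence on all of $B=\bigcup_{I\in\mathbb S}B_I$.
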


A fundamental result in the theory of slice monogenic functions is described by the splitting lemma, which relates the notion of  slice monogenicity to the classical notion of holomorphicity (see \cite{Co}).
\begin{lemma}\label{eq:Splitting}
Let $f$ be a slice monogenic function on $B = B(0,R)$. For each $I_1=I\in \mathbb S$, let $I_2,\dots,I_n$ be a completion to a basis of $\mathbb R^n$ satisfying the defining relations $I_iI_j+I_jI_i=-2\delta_{ij}$. Then there exist $2^{n-1}$ holomorphic functions $F_A:B_I\rightarrow \mathbb C_I$ such that for every $z=u+vI\in B_I $,
$$f_I(z)=\sum\limits_{|A|=0}^{n-1}F_A(z)I_A,$$
where $I_{A}=I_{i_{1}}I_{i_{2}}\ldots I_{i_{r}}$, $A=i_1i_2\ldots i_r$ is a multi-index such that $2\leq i_{1}<\cdots<i_{r}\leq n$ when $r>0$, or $I_0=1$ when $r=0$.
\end{lemma}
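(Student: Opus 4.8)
The plan is to reduce the vanishing of the slice operator $\bar\partial_I$ to the classical Cauchy--Riemann equations, coefficient by coefficient, by exploiting the fact that $\mathbb R_n$ is a free module over the plane $\mathbb C_I$ on the $2^{n-1}$ generators $I_A$.

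First I would record the underlying algebraic decomposition. Fix $I=I_1$ and complete it to a basis $I_1,\dots,I_n$ obeying $I_iI_j+I_jI_i=-2\delta_{ij}$. The $2^n$ ordered products $I_{A'}$ with $A'\subseteq\{1,\dots,n\}$ form an $\mathbb R$-basis of $\mathbb R_n$, and splitting each index set according to whether it contains $1$ exhibits this basis as $\{I_A\}\cup\{I_1I_A\}$, where $A$ ranges over the $2^{n-1}$ subsets of $\{2,\dots,n\}$ and $I_A=I_{i_1}\cdots I_{i_r}$. Since $\mathbb C_I=\mathbb R\oplus I_1\mathbb R$, this says precisely that $\mathbb R_n=\bigoplus_A \mathbb C_I\,I_A$ as a left $\mathbb C_I$-module with the $I_A$ as a basis. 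Extracting coefficients is therefore a fixed $\mathbb R$-linear isomorphism, so the restriction $f_I$ admits a unique expansion $f_I(z)=\sum_{|A|=0}^{n-1}F_A(z)I_A$ with coefficient functions $F_A:B_I\to\mathbb C_I$ that inherit the $C^1$ regularity of $f_I$.

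Next I would apply $\bar\partial_I$ term by term. Writing $z=u+vI$, differentiation in $u$ and $v$ touches only the $\mathbb C_I$-valued coefficients, so $\partial_u f_I=\sum_A(\partial_u F_A)I_A$ and $I\,\partial_v f_I=\sum_A\big(I\,\partial_v F_A\big)I_A$. The crucial observation is that $I=I_1\in\mathbb C_I$ and $\mathbb C_I$ is a commutative subalgebra, so each coefficient $I\,\partial_v F_A$ again lies in $\mathbb C_I$; left multiplication by $I$ never disturbs the basis elements $I_A$. Hence $\bar\partial_I f_I=\frac{1}{2}\sum_A\big(\partial_u F_A+I\,\partial_v F_A\big)I_A$, an expansion of the same type with $\mathbb C_I$-coefficients.

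Finally, because the decomposition $\mathbb R_n=\bigoplus_A\mathbb C_I\,I_A$ is direct, the hypothesis $\bar\partial_I f_I\equiv 0$ forces each coefficient to vanish, giving $\partial_u F_A+I\,\partial_v F_A=0$ on $B_I$. Identifying $\mathbb C_I$ with $\mathbb C$ via $I\leftrightarrow i$, this is exactly the Cauchy--Riemann system, so every $F_A$ is holomorphic, which completes the proof. I expect the only delicate point to be the bookkeeping behind the module decomposition---keeping the $\mathbb C_I$-coefficients on the left and confirming that $\bar\partial_I$ really acts diagonally with respect to the basis $\{I_A\}$; once that is in place the regularity and uniqueness claims are routine.
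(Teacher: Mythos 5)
Your proof is correct: the decomposition $\mathbb R_n=\bigoplus_A\mathbb C_I\,I_A$ as a free left $\mathbb C_I$-module on the $2^{n-1}$ elements $I_A$, the observation that $\bar\partial_I$ acts diagonally on this decomposition (since left multiplication by $I$ preserves each summand $\mathbb C_I I_A$), and the resulting componentwise Cauchy--Riemann equations constitute exactly the standard argument. Note that the paper itself does not prove this lemma but cites it from \cite{Co}, and your argument is essentially the proof given there, so there is nothing to add beyond the routine verification (which you correctly flag) that the ordered products $I_{A'}$, $A'\subseteq\{1,\dots,n\}$, do form an $\mathbb R$-basis of $\mathbb R_n$.
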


The following version of the identity principle is one of direct consequences of the preceding lemma (see \cite{Co}).

\begin{theorem}\label{th:IP-theorem}
Let $f$ be a slice monogenic function on $B = B(0,R)$. Denote by $\mathcal{Z}_f$ the zero set of $f$, $$\mathcal{Z}_f=\Big\{x\in B:f(x)=0\Big\}.$$
If there exists an $I\in \mathbb S$ such that $B_I\cap \mathcal{Z}_f$ has an accumulation point in $B_I$, then $f$ vanishes identically on $B$.
\end{theorem}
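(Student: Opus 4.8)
The plan is to reduce to the classical one-variable identity theorem by splitting $f$ on the slice $B_I$ into scalar holomorphic components, and then to propagate the resulting vanishing from the single slice $B_I$ to the whole ball $B$ by means of the Taylor expansion. First I would fix the $I\in\mathbb S$ furnished by the hypothesis, complete it to a basis $I_1=I,I_2,\ldots,I_n$ as in Lemma \ref{eq:Splitting}, and write
$$f_I(z)=\sum_{|A|=0}^{n-1}F_A(z)I_A,\qquad z=u+vI\in B_I,$$
with each $F_A:B_I\rightarrow\mathbb C_I$ holomorphic. Writing $F_A(z)=\alpha_A(z)+\beta_A(z)I$ with $\alpha_A,\beta_A$ real-valued, one obtains
$$f_I(z)=\sum_A\alpha_A(z)I_A+\sum_A\beta_A(z)\big(I_1I_A\big).$$

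Since the $2^{n-1}$ products $I_A$ (not involving the index $1$) together with the $2^{n-1}$ products $I_1I_A$ (each already in increasing order, hence involving the index $1$) constitute the full $\mathbb R$-basis $\{I_{i_1}\cdots I_{i_r}:1\le i_1<\cdots<i_r\le n\}$ of $\mathbb R_n$, they are linearly independent over $\mathbb R$. Consequently the vanishing $f(z)=0$ at a point $z$ forces $\alpha_A(z)=\beta_A(z)=0$, i.e.\ $F_A(z)=0$, for every $A$; equivalently,
$$B_I\cap\mathcal Z_f\subseteq\bigcap_A\big\{z\in B_I:F_A(z)=0\big\}.$$
By hypothesis $B_I\cap\mathcal Z_f$ has an accumulation point in $B_I$, so each of the finitely many holomorphic functions $F_A$ on the planar domain $B_I\subset\mathbb C_I\cong\mathbb C$ vanishes on a set with an accumulation point. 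The classical identity theorem for holomorphic functions of one complex variable then yields $F_A\equiv0$ on $B_I$ for every $A$, hence $f_I\equiv0$ on $B_I$.

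It remains to pass from the slice $B_I$ to the whole ball, which is the genuinely noncommutative step and the only point demanding care. Here I would invoke the Taylor expansion (Theorem \ref{eq:Taylor}): $f(x)=\sum_{k=0}^{\infty}x^ka_k$ with $a_k=f^{(k)}(0)/k!$. Because $0$ lies in $B_I$ and the slice derivative equals the partial derivative $\partial/\partial u$ along the real direction, each coefficient $a_k=\partial_I^k f(0)/k!$ is determined entirely by the values of $f$ on $B_I$ near the origin. Since $f_I\equiv0$, every such derivative vanishes, so $a_k=0$ for all $k$ and therefore $f\equiv0$ on all of $B$. Thus, once the Taylor coefficients are recognized as intrinsic to a single slice through the origin, the global conclusion follows at once.
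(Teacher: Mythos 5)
Your proof is correct and takes essentially the route the paper intends: the paper presents the Identity Principle as a direct consequence of the Splitting Lemma (citing the original work of Colombo--Sabadini--Struppa), namely splitting $f_I$ into the holomorphic components $F_A$, forcing each $F_A$ to vanish by linear independence of the basis products, and invoking the classical one-variable identity theorem on the slice $B_I$. Your globalization step via the Taylor expansion at the origin --- noting that the coefficients $a_k=f^{(k)}(0)/k!$ are computed from derivatives along the real direction and hence are determined by $f_I$ alone --- is a valid and standard way to pass from the slice to all of $B$, so the argument is complete.
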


Another useful result is the following (see \cite{CS}).

\begin{theorem}\label{eq:formula}
Let $ f $ be a slice monogenic function on a symmetric slice domain $U \subseteq \mathbb R^{n+1} $ and let $I \in \mathbb S.$ Then for all $ u+vJ\in U $ with $J \in \mathbb S$, the following equality holds
$$ f(u+vJ)=\frac{1}{2}\Big(f(u+vI)+f(u-vI)\Big)+\frac{1}{2}JI\Big(f(u-vI)-f(u+vI)\Big).$$
In particular, for each sphere of the form $u+v\mathbb S$ contained in $U$, there exist $b$, $c \in \mathbb R_n$ such that $f(u+vI)=b+Ic$ for all $I\in\mathbb S$.
\end{theorem}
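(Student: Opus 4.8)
The plan is to recognize the right-hand side as a function in its own right, prove that this function is slice monogenic and agrees with $f$ on one slice, and then invoke the Identity Principle to force equality. Concretely, I fix $I\in\mathbb S$ and define the candidate
\[
g(u+vJ):=\tfrac{1}{2}\big(f(u+vI)+f(u-vI)\big)+\tfrac{1}{2}JI\big(f(u-vI)-f(u+vI)\big),\qquad J\in\mathbb S .
\]
Since $U$ is axially symmetric, the whole sphere $u+v\mathbb S$ lies in $U$, so $g$ is defined on all of $U$, and it depends on $(u,v)$ only through the two slice values $f(u\pm vI)$. The entire statement, including the \emph{in particular} clause, follows at once from $g\equiv f$, so I reduce everything to proving that identity.

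\emph{Main step: $g$ is slice monogenic.} Writing $F(u,v):=f(u+vI)$ and $G(u,v):=f(u-vI)$, the restriction $g_J$ to $\mathbb C_J$ reads $g_J=\tfrac12(F+G)+\tfrac12 JI(G-F)$. Holomorphy of $f_I$ (Definition~\ref{de: regular}) gives the Cauchy--Riemann relation $\partial_u F=-I\,\partial_v F$, while the sign reversal in the second variable for $G(u,v)=f(u-vI)$ yields the companion relation $\partial_u G=I\,\partial_v G$. Substituting both into $\bar\partial_J g_J=\tfrac12(\partial_u+J\partial_v)g_J$ and simplifying with $I^2=J^2=-1$, the terms cancel in pairs and $\bar\partial_J g_J=0$ for every $J\in\mathbb S$, so $g$ is slice monogenic. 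I expect this computation to be the one real obstacle: because $\mathbb R_n$ is noncommutative, the product $JI$ must be kept on the correct side throughout, and the \emph{opposite} sign in the Cauchy--Riemann relation for $G$ is precisely what pairs with the factor $JI$ to produce the cancellation; it is this forced matching that pins down the exact form of the formula.

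\emph{Agreement on one slice and conclusion.} Evaluating the defining formula at $J=I$ and $J=-I$, and using $I\cdot I=-1$ and $(-I)\cdot I=1$, I obtain $g(u+vI)=f(u+vI)$ and $g(u-vI)=f(u-vI)$; as $(u,v)$ ranges over the admissible parameters these values exhaust the slice $U_I$. Hence $g-f$ is slice monogenic and vanishes on all of $U_I$. Since $U_I$ is open in $\mathbb C_I$, its points accumulate in $\mathbb C_I$, so Theorem~\ref{th:IP-theorem} (in its version for symmetric slice domains) gives $g\equiv f$ on $U$, which is the asserted equality. Setting $b:=\tfrac12\big(f(u+vI)+f(u-vI)\big)$ and $c:=\tfrac12 I\big(f(u-vI)-f(u+vI)\big)$, the formula becomes $f(u+vJ)=b+Jc$ for all $J\in\mathbb S$, with $b,c\in\mathbb R_n$ independent of $J$; this is the \emph{in particular} statement.

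As a consistency check on balls, one may instead expand $f(x)=\sum_{k}x^k a_k$ via Theorem~\ref{eq:Taylor} and use $(u+vJ)^k=\alpha_k(u,v)+J\beta_k(u,v)$, where $\alpha_k,\beta_k$ are the real polynomials arising from the ordinary complex identity $(u+vi)^k=\alpha_k+i\beta_k$ and are therefore independent of $J$. This gives directly $f(u+vJ)=A+JB$ with $A=\sum_k\alpha_k a_k$ and $B=\sum_k\beta_k a_k$; solving the pair $f(u+vI)=A+IB$, $f(u-vI)=A-IB$ recovers exactly $A=b$ and $B=c$, confirming the formula independently of the identity-principle argument in the special case of open balls.
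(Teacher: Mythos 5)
Your proof is correct and is essentially the standard one: the paper itself does not prove this theorem but quotes it from \cite{CS}, and the argument there (as in the quaternionic case in \cite{GSS}) proceeds exactly as you do---take the right-hand side as a candidate function $g$, verify $\bar\partial_J g_J=0$ on every slice via the Cauchy--Riemann relations $\partial_u F=-I\partial_v F$ and $\partial_u G=I\partial_v G$, check $g=f$ on $\mathbb C_I$ by evaluating at $J=\pm I$, and conclude by the identity principle, with your power-series expansion on balls being the standard alternative route mentioned after Remark \ref{R-extension}. The only step you gloss over is that $g$ is well defined on $U$: the two representations $(u,v,J)$ and $(u,-v,-J)$ of the same point must give the same value (they do, since both the symmetric part and the product $JI(G-F)$ are invariant under this substitution, and at $v=0$ the formula returns $f(u)$ for every $J$), a one-line check worth recording before invoking slice monogenicity of $g$.
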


Thanks to this result, it is possible to recover the values of a slice monogenic function on symmetric slice domains, which are more
general  than open balls centred at the origin, from its values on a single
slice. This yields an extension theorem  that in the special case of functions that are slice monogenic on $B(0,R)$ can be obtained by means of their power series
expansions.

\begin{remark}\label{R-extension}
 Fix  an element $I\in\mathbb S$ and we denote by  $B_I$ the intersection $B(0,R)\cap \mathbb C_I$ of the open ball $B(0,R)$ with the complex plane $\mathbb C_I$. Given  a holomorphic function $f_I:B_I\rightarrow\mathbb C_I$ with the
power series expansion taking the form
$$f_I(z)=\sum\limits_{k=0}^{\infty}z^ka_k,$$
where $\{a_k\}_{k\in \mathbb N}\subset\mathbb C_I$, then the unique slice monogenic extension of $f_I$ to the whole ball $B(0,R)$ is the function
given by
$$f(x):={\rm{ext}}(f_I)(x)=\sum\limits_{k=0}^{\infty}x^ka_k,$$
which takes values in $\mathbb R_n$.
The uniqueness is guaranteed by the identity principle (Theorem \ref{th:IP-theorem}). In Section \ref{GDTs for slice monogenicity}, we will  establish the growth and distortion theorems  for such a class of slice monogenic functions  that are injective on $B_I$.
\end{remark}

Since slice monogenicity does not keep under the usual pointwise product of two slice monogenic functions, a new multiplication operation, called  slice monogenic product (or $\ast$-product), appears via  a suitable modification of the usual one subject to noncommutative setting, and plays a key role in the theory of slice monogenic functions. On open balls centred at the origin, the slice monogenic product of two slice monogenic functions is defined by means of their power series expansions (see \cite{Co1, Co2}).

\begin{definition}\label{R-product}
Let $f$, $g:B=B(0,R)\rightarrow \mathbb R_n$ be two slice monogenic functions and let
$$f(x)=\sum\limits_{k=0}^{\infty}x^ka_k,\qquad g(x)=\sum\limits_{k=0}^{\infty}x^kb_k$$
be their power series expansions. The \textit{slice monogenic product} (\textit{$\ast$-product}) of $f$ and $g$ is the function defined by
$$f\ast g(x)=\sum\limits_{k=0}^{\infty}x^k\bigg(\sum\limits_{j=0}^k a_jb_{k-j}\bigg),$$
which is slice monogenic on $B$.
\end{definition}
We now recall more definitions  (see e.g. \cite{Co1, Co2, Ghiloni2011, Ghiloni1}).
\begin{definition} \label{de: R-conjugate}
Let $ f(x)=\sum_{k=0}^{\infty}x^ka_k $ be a slice monogenic function on $B=B(0,R)$. We define the \emph{slice monogenic conjugate} of $f$ as
$$f^c(x)=\sum\limits_{k=0}^{\infty}x^k\bar{a}_k,$$
and  the \emph{symmetrization} of $f$ as
\begin{equation}\label{Definition of symmetrization for SM functions}
f^s(x):=\sum\limits_{k=0}^{\infty}x^k{\rm{Sc}}\bigg(\sum\limits_{j=0}^k a_j\bar{a}_{k-j}\bigg).
\end{equation}
Moreover, we define the \textit{normal function}  of $f$ as
\begin{equation}\label{Definition of normal-function for SM functions}
N(f)(x):=f\ast f^c(x)=\sum\limits_{k=0}^{\infty}x^k\bigg(\sum\limits_{j=0}^k a_j\bar{a}_{k-j}\bigg).
\end{equation}
These three functions are slice monogenic  on $B$.
\end{definition}

\begin{remark} \label{remarks on SM-conjugate}
Several useful remarks concerning Definitions \ref{R-product}  and \ref{de: R-conjugate} are in order:
\begin{enumerate}[leftmargin=1.7pc, parsep=4pt,label=(\roman*)]
\item [{\rm{(i)}}] Slice monogenic product ($\ast$-product), slice monogenic conjugate,  and symmetrization can also be defined for slice monogenic functions $f$ on symmetric slice domains $U$ in $\mathbb R^{n+1}$ (we refer the interested reader to \cite{Co1} or \cite[Section 2.6]{Co2} for  details). Moreover, for any two slice monogenic functions  $f, g:U\to\mathbb R_n$ and each point $x_0\in \mathbb R$, we can define two slice monogenic functions $f_{x_0}$ and $g_{x_0}$ on the symmetric slice domain $U_{x_0}:=U-x_0$ by setting
    $$f_{x_0}(x)=f(x+x_0),\qquad g_{x_0}(x)=g(x+x_0)$$ for each $x\in U_{x_0}$. Then we have the following identity
    $$(f\ast g)_{x_0}=f_{x_0}\ast g_{x_0}.$$
    This follows from the identity principle together  with the fact that when restricted to the real axis, slice monogenic product is just the usual pointwise one.

\item [{\rm{(ii)}}]
For slice monogenic functions on open balls of type $B:=B(0, R)$, the notion of slice monogenic conjugate  coincides with the one introduced in \cite[Definition 5.4]{Co1} (see \cite[Proposition 5.5]{Co1}). The notion of symmetrization given here is also equivalent to the one introduced in \cite[Definition 5.6]{Co1}. To see this, we proceed as follows. For a slice monogenic function $f:B\to \mathbb R_n$, we denote by $f^{\textbf{s}}$ the symmetrization of $f$ according to \cite[Definition 5.6]{Co1}.
By considering  the power series expansion of $f^{\textbf{s}}$, we may assume that
\begin{equation}\label{Sym-Sabadini}
f^{\textbf{s}}(x)=\sum_{k=0}^{\infty}x^k\alpha_k.
\end{equation}
We also fix an element $I\in\mathbb S$. Then according to  \cite[p.386]{Co1} or \cite[p.50]{Co2}, for each $x\in B_I$, we have
$$f^{\textbf{s}}(x)={\rm{Sc}}\big(f\ast f^c(x)\big)+\big\langle f\ast f^c(x), \, I\big\rangle I.$$
Now substituting $(\ref{Definition of normal-function for SM functions})$ and $(\ref{Sym-Sabadini})$ into the preceding equality, we see that for each $x\in B\cap\mathbb R$,
$$\sum_{k=0}^{\infty}x^k\alpha_k
=\sum_{k=0}^{\infty} x^k{\rm{Sc}}\Big(\sum_{j=0}^k a_j\bar{a}_{k-j}\Big)
+\sum_{k=0}^{\infty}x^k \Big\langle \sum_{j=0}^k a_j\bar{a}_{k-j}, \, I\Big\rangle I.$$
For each $k\in \mathbb N$, since $\sum_{j=0}^k a_j\bar{a}_{k-j}$ is invariant under Clifford conjugate (see $(\ref{Coefficient conjugate invariant})$ below), the second summation on the right-hand side of the preceding equality must vanish identically. Indeed, in view of $(\ref{99})$,
$$\Big\langle \sum_{j=0}^k a_j\bar{a}_{k-j}, \, I\Big\rangle
=\Big\langle \sum_{j=0}^k \overline{a_j\bar{a}_{k-j}}, \, \overline{I}\Big\rangle
=-\Big\langle \sum_{j=0}^k a_j\bar{a}_{k-j}, \, I\Big\rangle,$$
which must be zero. Consequently, we deduce that the following equality
$$\sum_{k=0}^{\infty}x^k\alpha_k
=\sum_{k=0}^{\infty} x^k{\rm{Sc}}\Big(\sum_{j=0}^k a_j\bar{a}_{k-j}\Big)$$
holds for all $x\in B\cap\mathbb R$. By uniqueness,
$$\alpha_k={\rm{Sc}}\Big(\sum_{j=0}^k a_j\bar{a}_{k-j}\Big),\qquad \forall \, k\in \mathbb N.$$
This shows that $f^{\textbf{s}}$ is  the same as $f^s$ defined in $(\ref{Definition of symmetrization for SM functions})$.

\item [{\rm{(iii)}}] In view of ${\rm{(i)}}$,  the definition $N(f):=f\ast f^c$ is also valid  for slice monogenic functions $f$ on symmetric slice domains in $\mathbb R^{n+1}$.

\item [{\rm{(iv)}}] The notation $N(f)$ in the definition of normal functions is chosen in accordance with \cite[Definition 11]{Ghiloni1}, which treated the case of slice functions on symmetric open subsets of the so-called \textit{quadratic cone} of a finite-dimensional real alternative $^{\ast}$-algebra.
\item [{\rm{(v)}}]  For each slice monogenic function $f$ on a symmetric slice domain  $U\subseteq\mathbb R^{n+1}$ and each element $I\in\mathbb S$, the restriction $N(f)_I$ of $N(f)$ to $U_I:=U\cap \mathbb C_I$ coincides with the function $f_I\ast f^c_I: U_I\to \mathbb R_n$ considered in \cite{Co1} or \cite[Section 2.6]{Co2}.
\end{enumerate}
\end{remark}

With parts {\rm{(i)}} and  {\rm{(iii)}} of  Remark \ref{remarks on SM-conjugate} in mind,
the inverse element of a non-identically vanishing slice monogenic functions   with respect to the $\ast$-product can be defined under a suitable condition.

\begin{definition} \label{de: R-Inverse}
Let $f$ be a slice monogenic function on a symmetric slice domain $U\subseteq \mathbb R^{n+1}$ such that
 $$N(f)(U_I)\subseteq\mathbb C_I$$
for \textit{some} $I\in \mathbb S$. If $f$ does not vanish identically, its \emph{slice monogenic inverse} is the function defined by
$$f^{-\ast}(x):=f^s(x)^{-1}f^c(x),$$
which is
slice monogenic on $U \setminus \mathcal{Z}_{f^s}$. Here $\mathcal{Z}_{f^s}$ denotes the zero set of the symmetrization $f^s$ of $f$.
\end{definition}

\begin{remark}\label{remarks on SM-Inverse}
Two useful remarks concerning Definition  \ref{de: R-Inverse} are in order:
\begin{enumerate}[leftmargin=1.7pc, parsep=4pt,label=(\roman*)]
\item [{\rm{(i)}}] For each function $f$ as described in Definition \ref{de: R-Inverse}, the  requirement that $$N(f)(U_I)\subseteq\mathbb C_I$$  for some  $I\in\mathbb S$  is designed to guarantee that $f^s_I$ coincides with $N(f)_I=f_I\ast f^c_I$,
    see \cite[Definition 2.6.10]{Co2}, although this fact is not explicitly proven in \cite{Co2}.

\item [{\rm{(ii)}}] Also we will see, in the proof of Proposition
\ref{Proof of Inverse verification} below, that for each function $f$ as described in Definition \ref{de: R-Inverse}, the coefficients appeared  in $(\ref{Definition of normal-function for SM functions})$ are \textit{real numbers}. This implies that for each such function $f$, its normal function $N(f)$ is the same as its symmetrization $f^s$, which is a slice preserving function  so that its slice monogenic inverse
     $$f^{-\ast}(x)=f^s(x)^{-1}f^c(x)=\big(N(f)(x)\big)^{-1}f^c(x)$$
     is indeed slice monogenic on $U\setminus \mathcal{Z}_{f^s}$. Furthermore, it is well worth noting that in view of \cite[Remark 2.6.8 and Lemma 2.5.12]{Co2}, the zero set $\mathcal{Z}_{f^s}$ of $f^s$ is precisely the union of isolated spheres of the form $u+v\mathbb S$ with $u$, $v\in \mathbb R$. This implies  that $U\setminus \mathcal{Z}_{f^s}$ is a symmetric slice domain in $\mathbb R^{n+1}$.
\end{enumerate}
\end{remark}

The function $f^{-\ast}$ defined in Definition \ref{de: R-Inverse} deserves the name of slice monogenic inverse of $f$ due to the following:
\begin{proposition}\label{Proof of Inverse verification}
Let $f$ be as described in Definition  $\ref{de: R-Inverse}$. Then  we have
\begin{equation}\label{Inverse verification}
\left.f\right|_{U \setminus \mathcal{Z}_{f^s}}\ast f^{-\ast}
=f^{-\ast}\ast \left.f\right|_{U \setminus \mathcal{Z}_{f^s}}=1,
\end{equation}
and
\begin{equation}\label{Double Inverse}
(f^{-\ast})^{-\ast}=\left. f\right|_{U \setminus \mathcal{Z}_{f^s}}.
\end{equation}
\end{proposition}

This proposition is quite important in the theory of slice monogenic functions. Equalities in $(\ref{Inverse verification})$ firstly appeared in \cite[Proposition 5.9]{Co1}, while the proofs given there and in \cite[Proposition 2.6.11]{Co2} seem incomplete. Indeed, the equality $f_I\ast f_I^c=f_I^c\ast f_I$ (which is equivalent to $N(f)=N(f^c)$,  in view of  Remark \ref{remarks on SM-conjugate} (v) and the identity principle) is used without proving it. A different approach has been used in \cite[Proposition 3.2]{Co2011}. A complete treatment has been given in \cite[Section 2]{Ghiloni2} in the case of slice functions, which subsumes the case of slice monogenic functions. In order to make our presentation self-contained, we provide here a detailed proof of Proposition \ref{Proof of Inverse verification}.

\begin{proof}
We first prove equality $(\ref{Inverse verification})$. To this end, we need the following well known facts:
\begin{description}
  \item[Fact 1 ] \textit{For  any $a$, $b\in \mathbb R_n$, $ab=1$ if and only if $ba=1$.}
\end{description}
\begin{description}
  \item[Fact 2 ] \textit{For  each  $a\in \mathbb R_n$, $a\overline{a}=0$ if and only if $a=0$.}
\end{description}
Indeed, Fact $1$   holds  for all finite-dimensional associative algebras (see e.g. \cite[Theorem 1.2.1]{Drozd-Kirichenko}), and Fact $2$, which immediately follows from $(\ref{inner-product-definition001})$, is called \textit{non-singularity}  of $\mathbb R_n$.

Note that $f$  does not vanish identically on $U$, and so does the restriction $\left. f\right|_{U\cap\mathbb R}$ of $f$ to  $U\cap\mathbb R$, in view of the identity principle. Thus we can find one point $x_0\in U\cap\mathbb R$ and a positive number $R>0$ such that the open ball $B(x_0, R)$ is contained in $U$ and $f$ is  nowhere vanishing on $B(x_0, R)$. Thanks to Remark  \ref{remarks on SM-conjugate} (i), we may further assume that $x_0=0$ without loss of generality.
Now we expand $f$ on $B:=B(0, R)$ as
 $$f(x)=\sum_{k=0}^{\infty}x^ka_k.$$
Since there exists an element $I\in\mathbb S$ such that $N(f)=f\ast f^c$ maps $U_I$ into $\mathbb C_I$ (also maps $B_I$ into $\mathbb C_I$), and
\begin{equation}\label{Coefficient conjugate invariant}
\sum_{j=0}^k \overline{a_j\bar{a}_{k-j}}=\sum\limits_{j=0}^k a_{k-j}\bar{a}_j \xlongequal[\quad]{j\rightarrow k-j}\sum\limits_{j=0}^k a_j\bar{a}_{k-j},
\end{equation}
we see that for each $k\in\mathbb N$, $\sum_{j=0}^k a_j\bar{a}_{k-j}$ must be a real number. Therefore, $f\ast f^c$ is slice preserving and maps $ B\cap\mathbb R$ into $\mathbb R$. We next show that
\begin{equation}\label{Key relation in  the proof of R-inverse}
f^c\ast f=f\ast f^c.
\end{equation}
We proceed as follows. In view of Definition \ref{de: R-conjugate},
$$\left. f\ast f^c\right|_{B\cap\mathbb R}=\left.(f \bar{f}\,)\right|_{B\cap\mathbb R}.$$
Since $f\ast f^c(B\cap\mathbb R)\subseteq\mathbb R$, we deduce that
the restriction $\left.(f \bar{f}\,)\right|_{B\cap\mathbb R}$ takes values in $\mathbb R$ as well. This together with the preceding two facts implies that
$$\left.(f \bar{f}\,)\right|_{B\cap\mathbb R}=\left.(\bar{f}f)\right|_{B\cap\mathbb R}.$$
The right-hand side is no other than the restriction $\left.f^c\ast f\right|_{\mathbb B\cap\mathbb R}$, according to  Definitions \ref{R-product} and \ref{de: R-conjugate}. Now we obtain that
$f\ast f^c$ coincides with  $f^c\ast f$ on $B\cap\mathbb R\subset U$, and hence on $U$ by the identity principle.
Now by using \cite[Proposition 2.6.9]{Co2}, Remark \ref{remarks on SM-Inverse} (ii) and equality (\ref{Key relation in  the proof of R-inverse}),  we can conclude the proof of equality $(\ref{Inverse verification})$ as follows:
$$f^{-\ast}\ast f=\frac1{f^s}(f^c\ast f)=\frac1{f^s}(f\ast f^c)=\frac1{f^s}N(f)=1,$$
and
$$f\ast f^{-\ast}=f\ast(\frac1{f^s}f^c)=\frac1{f^s}(f\ast f^c)=1.$$

Now it remains to prove $(\ref{Double Inverse})$.  In view of the very definition, we first need to show that $f^{-\ast}$ satisfies the condition given in Definition \ref{de: R-Inverse}. To see this, let $I$ be an element of $\mathbb S$ such that $f$
satisfies the assumption. From the above argument, we know that $f^s=N(f)$ is slice preserving. This together with   $(\ref{Key relation in  the proof of R-inverse})$ and \cite[Proposition 2.6.9]{Co2} implies that
$$f^{-\ast}\ast (f^{-\ast})^c=\frac1{N(f)}$$
so that $f^{-\ast}$ satisfies the assumption given in Definition \ref{de: R-Inverse} and hence $(f^{-\ast})^{-\ast}$ is well-defined on $U\setminus \mathcal{Z}_{f^s}$.  Now $(\ref{Double Inverse})$ follows from $(\ref{Inverse verification})$ and uniqueness of $(f^{-\ast})^{-\ast}$.
\end{proof}

\section{Growth and Distortion Theorems for  Slice Monogenic Functions}\label{GDTs for slice monogenicity}
In this section, we establish in the setting of Clifford algebra $\mathbb R_n$ the growth  and distortion theorems for  slice monogenic extensions to  the open unit ball $\mathbb B:=\big\{x\in\mathbb R^{n+1}: |x|<1\big\}$ of univalent   functions on the unit disc $\mathbb D\subset \mathbb C$.  We begin with a  technical proposition.
To present  it more generally, we will digress for a moment to slice monogenic functions on general symmetric  slice domains.

\begin{proposition}\label{eq:C-version}
Let $U \subseteq \mathbb R^{n+1}$ be a symmetric slice domain and  $f:U\rightarrow \mathbb R_n$ a slice monogenic  function. Then  for every $x=u+vJ \in U $ and every $I\in \mathbb S$,  there   holds the identity
\begin{equation}\label{eq:C-relation}
\big|f(x)\big|^2=\frac{1+\langle I,J\rangle}{2}\big|f(y)\big|^2+
\frac{1-\langle I,J\rangle}{2}\big|f(\bar y)\big|^2-
 \Big\langle f(y)\overline{f(\bar y)}, \, I\wedge J\Big\rangle,
\end{equation}
where  $y=u+vI$ and $\bar y=u-vI$.
\end{proposition}

\begin{proof}
Fix an arbitrary point  $ x=u+vJ \in U$ and an element $I\in \mathbb S$. Set $y:=u+vI$ and $\bar y:=u-vI$. It follows from Theorem \ref{eq:formula} that
\begin{equation}\label{eq: Rep99}
f(x)=\frac{1}{2}\big(f(y)+f(\bar y)\big)-\frac{1}{2}JI\big(f(y)-f(\bar y)\big)
\end{equation}
Notice that, in vector notation,
\begin{equation}\label{eq: vector product101}
\langle I,J\rangle={\rm{Sc}}(I\bar J)=-\frac{1}{2}(IJ+JI),
\end{equation}
and
\begin{equation}\label{1010}
I\wedge J=\frac{1}{2}(IJ-JI).
\end{equation}
We shall use the simple identity that
\begin{equation}\label{eq: Rep100} |a+b|^2=|a|^2+|b|^2+2\langle a,b\rangle
\end{equation}
for any $a$, $b\in \mathbb R_n\simeq\mathbb R^{2^n}$.

 Observe that $I$ and $J$ are 1-vectors and hence are paravectors. In view of (\ref{mult-norm}), it holds that
$$\big|JI\big(f(y)-f(\bar y)\big)\big|=\big|f(y)-f(\bar y)\big|. $$
Taking modulus on both sides of (\ref{eq: Rep99}) and applying (\ref{eq: Rep100})  to obtain
\begin{eqnarray}\label{eq: modulus101}
 \begin{split}
\big|f(x)\big|^2
=& \frac{1}{4}\Big(\big|f(y)+f(\bar y)\big|^2+\big|f(y)-f(\bar y)\big|^2\Big)-
\\
&
\frac12\Big\langle f(y)+f(\bar y),\, JI\big(f(y)-f(\bar y)\big)\Big\rangle\\
=&: A-\frac{1}{2}B.
\end{split}
\end{eqnarray}
Again applying (\ref{eq: Rep100}), it is evident that
\begin{equation}\label{102}
 A=\frac{1}{2}(|f(y)|^2+|f(\bar y)|^2).
\end{equation}
To calculate the term  $B$, it first follows from the very definition of inner product (see (\ref{inner-product-definition001})) that
\begin{equation}\label{103}
B=\Big\langle \big(f(y)+f(\bar y)\big)\big(\overline{f(y)}-\overline{f(\bar y)}\,\big),\, JI\Big\rangle
=:B_1+B_2,
\end{equation}
where $B_1=\Big\langle f(y)\overline{f(y)}-f(\bar y)\overline{f(\bar y)},\, JI\Big\rangle$,
and $B_2=\Big\langle f(\bar y)\overline{f(y)}-f(y)\overline{f(\bar y)},\, JI\Big\rangle$.

We next claim that
\begin{equation}\label{104}
B_1=-\langle I,J\rangle\big(|f(y)|^2-|f(\bar y)|^2\big),
\end{equation}
and
\begin{equation}\label{105}
B_2=2 \Big\langle f(y)\overline{f(\bar y)},\,  I\wedge J\Big\rangle.
\end{equation}
Indeed, applying the fact that $\langle a,b\rangle=\langle \bar a,\bar b\rangle$ from (\ref{99}) to $B_1$ yields that
\begin{equation*}\label{106}
  B_1=\Big\langle f(y)\overline{f(y)}-f(\bar y)\overline{f(\bar y)},\, IJ\Big\rangle
\end{equation*}
Combining this, (\ref{eq: vector product101}) and the initial notion of $B_1$, we thus obtain
\begin{equation*}\label{107}
 \begin{split}
   B_1=&\frac12\Big\langle f(y)\overline{f(y)}-f(\bar y)\overline{f(\bar y)},\, IJ+JI\Big\rangle \\
    =&-\Big\langle f(y)\overline{f(y)}-f(\bar y)\overline{f(\bar y)},\, \langle I,J\rangle \Big\rangle\\
    =&-\langle I,J\rangle\Big\langle f(y)\overline{f(y)}-f(\bar y)\overline{f(\bar y)},\, 1\Big\rangle\\
    =&-\langle I,J\rangle\big(|f(y)|^2-|f(\bar y)|^2\big).
 \end{split}
\end{equation*}
Similarly,
\begin{equation*}
B_2=\Big\langle \overline{f(\bar y)\overline{f(y)}},\,
\overline{JI}\Big\rangle-\Big\langle f(y)\overline{f(\bar y)}, \,  JI\Big\rangle=2\Big\langle f(y)\overline{f(\bar y)},\,  I\wedge J\Big\rangle
\end{equation*}
as desired. In the second equality we have used (\ref{1010}). Now substituting (\ref{102})--(\ref{105}) into (\ref{eq: modulus101}) yields that
$$\big|f(x)\big|^2=\frac{1+\langle I,J\rangle}{2}\big|f(y)\big|^2+
\frac{1-\langle I,J\rangle}{2}\big|f(\bar y)\big|^2-
\Big\langle f(y)\overline{f(\bar y)}, \, I\wedge J\Big\rangle,$$
which completes the proof.
\end{proof}

The preceding theorem shows that when $f$ preserves at least one slice, the squared norm of $f$ can thus be expressed as a convex  combination of those in the preserved slice.
\begin{lemma}\label{eq:norm}
Let $f$ be a slice monogenic function on a symmetric slice domain  $U \subseteq \mathbb R^{n+1} $ such that $f(U_I)\subseteq \mathbb C_I $ for some $I\in \mathbb S $.  Then
the convex combination identity
\begin{equation}\label{eq: modulus12}
\big|f(u+vJ)\big|^2 =\frac{1+\langle I,J\rangle}{2}\big|f(u+vI)\big|^2+
\frac{1-\langle I,J\rangle}{2}\big|f(u-vI)\big|^2
\end{equation}
holds for every $ u+vJ \in U $.
\end{lemma}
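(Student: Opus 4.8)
The plan is to read off the identity (\ref{eq: modulus12}) directly from the general formula (\ref{eq:C-relation}) of Lemma \ref{eq:C-version}, by checking that, under the additional hypothesis $f(U_I)\subseteq\mathbb C_I$, the final ``correction'' term
\[
\frac12\Big\langle f(y)\overline{f(\bar y)}-f(\bar y)\overline{f(y)},\,I\wedge J\Big\rangle
\]
vanishes. Here $y=u+vI$ and $\bar y=u-vI$; since $U$ is an axially symmetric slice domain, both $y$ and $\bar y$ lie on the sphere $u+\abs{v}\mathbb S\subseteq U$ and in $\mathbb C_I$, hence in $U_I$, so that $f(y),f(\bar y)\in\mathbb C_I$ by assumption.

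First I would analyze the Clifford element $w:=f(y)\overline{f(\bar y)}-f(\bar y)\overline{f(y)}$. Setting $a=f(y)$ and $b=f(\bar y)$, both of which lie in the commutative plane $\mathbb C_I\cong\mathbb C$, the anti-automorphism and involutivity of the Clifford conjugate give $f(\bar y)\overline{f(y)}=b\bar a=\overline{a\bar b}$, so that $w=a\bar b-\overline{a\bar b}=z-\bar z$ with $z=a\bar b\in\mathbb C_I$. Because the Clifford conjugate restricts to $\overline{\alpha+I\beta}=\alpha-I\beta$ on $\mathbb C_I=\mathbb R\oplus I\mathbb R$, the element $z-\bar z$ is a real multiple of the $1$-vector $I$; in other words $w\in I\mathbb R$.

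Next I would verify the orthogonality relation $\langle I,\,I\wedge J\rangle=0$. This is a pure grade argument: $I$ is a $1$-vector, whereas $I\wedge J=\frac12(IJ-JI)$ is a $2$-vector (bivector), as one sees by subtracting $JI=-\langle I,J\rangle-I\wedge J$ from $IJ=-\langle I,J\rangle+I\wedge J$; and in the orthonormal basis $\{e_A\}$ the inner product $\langle a,b\rangle=\sum_A a_Ab_A$ pairs only components indexed by the same multi-index, so elements of different grades are orthogonal. Combining the two steps, the correction term is a real multiple of $\langle I,\,I\wedge J\rangle=0$ and hence vanishes, whereupon (\ref{eq:C-relation}) collapses precisely to the convex combination identity (\ref{eq: modulus12}).

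The main, and essentially the only, obstacle is to see that the correction term genuinely drops out. Once one notices that the commutator-type expression $w$ is forced by the hypothesis $f(U_I)\subseteq\mathbb C_I$ into the line $I\mathbb R$, which is orthogonal to the bivector $I\wedge J$, the conclusion is immediate; everything else is routine bookkeeping with the conjugation and inner-product conventions already fixed in Section 2.
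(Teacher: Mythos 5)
Your proof is correct, but it takes a genuinely different route from the one the paper writes out. The paper explicitly declines to derive Lemma \ref{eq:norm} from Lemma \ref{eq:C-version} (calling it a ``direct consequence'' but offering ``an alternative easier method''): its written proof starts from the representation formula (Theorem \ref{eq:formula}), writes $JI=-\langle I,J\rangle+J\wedge I$, splits $f(x)=\frac12A+\frac12(J\wedge I)B$ with $A=\big(1+\langle I,J\rangle\big)f(y)+\big(1-\langle I,J\rangle\big)f(\bar y)$ and $B=f(\bar y)-f(y)$ both in $\mathbb C_I$, and then computes $|f(x)|^2=\frac14|A|^2+\frac14|J\wedge I|^2|B|^2$ using the orthogonality of the set $\{1,I,I\wedge J,I(I\wedge J)\}$, the norm identity (\ref{mult-norm}), and $|J\wedge I|^2=1-\langle I,J\rangle^2$. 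You instead carry out in detail the deduction the paper only asserts: under $f(U_I)\subseteq\mathbb C_I$ the correction term in (\ref{eq:C-relation}) vanishes, because $w=f(y)\overline{f(\bar y)}-f(\bar y)\overline{f(y)}=z-\bar z$ with $z=f(y)\overline{f(\bar y)}\in\mathbb C_I$ (so $w\in I\mathbb R$), while $I\wedge J$ is a pure bivector and the inner product pairs only like basis components, whence $\langle w,I\wedge J\rangle$ is a real multiple of $\langle I,I\wedge J\rangle=0$. All your steps are sound, including the preliminary checks that $y,\bar y\in U_I$ by axial symmetry and that $f(\bar y)\overline{f(y)}=\overline{f(y)\overline{f(\bar y)}}$ by the anti-automorphism property of the Clifford conjugation. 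The trade-off: your argument is shorter once Lemma \ref{eq:C-version} is in hand and isolates exactly where the slice-preserving hypothesis enters (it kills the cross term, which is the conceptual heart of the matter); the paper's argument is self-contained, bypassing the heavier computation of Lemma \ref{eq:C-version} entirely, at the cost of redoing the decomposition and introducing the four-element orthogonality fact.
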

\begin{proof}
As mentioned before, this lemma is a direct consequence of the preceding proposition. But here, we would like to provide an alternative easier approach to it under having no idea about  Proposition \ref{eq:C-version}.

First, we have the following simple fact, which can be easily verified.
\begin{description}
  \item[Fact]
 \textit{For any $I$, $J\in \mathbb S$, the set
$$\Big\{1,\, I,\,  I\wedge J,\, I(I\wedge J)\Big\}$$
is an orthogonal set of $\mathbb R_n\simeq\mathbb R^{2^n}$.}
\end{description}

As in the preceding proposition, it follows from Theorem \ref{eq:formula} that
\begin{equation}\label{eq: Rep12}
f(x)=\frac{1}{2}\big(f(y)+f(\bar y)\big)-\frac{1}{2}JI\big(f(y)-f(\bar y)\big)
\end{equation}
for every  $ x=u+vJ \in U$ with $y=u+vI$ and $\bar y=u-vI$.
We can rewrite (\ref{eq: Rep12}), in  term of the relation that $$JI=-\langle I,J\rangle+J\wedge I,$$
as
\begin{equation*}
\begin{split}
f(x)&=\frac{1}{2}\Big(\big(1+\langle I,J\rangle \big)f(y)+\big(1-\langle I,J\rangle \big)f(\bar y)\Big)+
\frac{1}{2}\big(J\wedge I\big)\Big(f(\bar y)-f(y)\Big)
\\
&=: \frac{1}{2} A+
\frac{1}{2}(J\wedge I) B
\end{split}
\end{equation*}
By assumption $f(U_I)\subseteq \mathbb C_I $, we thus have
$$A\in \mathbb C_I, \qquad B\in \mathbb C_I.$$
From the fact above and equality (\ref{mult-norm}), taking modulus on both sides  yields
\begin{eqnarray}\label{eq:identity1}
\big|f(x)\big|^2
=\frac{1}{4}|A|^2+\frac{1}{4}\big|J\wedge I\big|^2|B|^2,
\end{eqnarray}
A simple calculation shows that
\begin{equation}\label{eq:identity2}
\begin{split}
|A|^2
=&\big(1+\langle I,J\rangle\big)^2\big|f(y)\big|^2
+\big(1-\langle I,J\rangle\big)^2\big|f(\bar y)\big|^2\\
&+2\big(1-\langle I,J\rangle^2\big)\big\langle f(y),f(\bar y)\big\rangle
\end{split}
\end{equation}
and
\begin{eqnarray}\label{eq:identity3}
|B|^2=\big|f(y)\big|^2+\big|f(\bar y)\big|^2
-2\big\langle f(y),f(\bar y)\big\rangle.
\end{eqnarray}
Notice that
\begin{eqnarray}\label{eq:identity4}
|J\wedge I|^2=1-\langle I,J\rangle^2.
\end{eqnarray}
Now inserting (\ref{eq:identity2}), (\ref{eq:identity3}) and (\ref{eq:identity4}) into (\ref{eq:identity1}) yields
$$\big|f(x)\big|^2 =\frac{1+\langle I,J\rangle}{2}\big|f(y)\big|^2+
\frac{1-\langle I,J\rangle}{2}\big|f(\bar y)\big|^2,$$
which completes the proof.
\end{proof}

\begin{remark}\label{remark}
The counterpart of the convex combination identity $(\ref{eq: modulus12})$ in Lemma \ref{eq:norm} also holds for slice regular functions defined on octonions  or more general real alternative algebras under the extra assumption that $f$ preserves at least one slice.  This can be verified similarly as in the proof of Proposition \ref{eq:C-version}, see \cite{Wang, RWX} for details.
\end{remark}

As a direct  consequence of Lemma \ref{eq:norm}, we conclude that  the maximum and  minimum moduli of $f$ are actually attained on the preserved slice.
\begin{corollary}\label{eq:modulus1}
Let $f$ be a slice monogenic  function on a symmetric slice domain  $U \subseteq \mathbb R^{n+1} $ such that $f(U_I)\subseteq \mathbb C_I $ for some $I\in \mathbb S $.  Then for each sphere $ u+v\mathbb S \subset U$, we have the following equalities:
$$\max_{J\in \mathbb S}\big|f(u+vJ)\big|=\max\Big(\big|f(u+vI)\big|,\big|f(u-vI)\big|\Big),$$
and
$$\min_{J\in \mathbb S}\big|f(u+vJ)\big|=\min\Big(\big|f(u+vI)\big|,\big|f(u-vI)\big|\Big).$$
\end{corollary}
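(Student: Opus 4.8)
The plan is to derive both equalities directly from the convex combination identity of Lemma \ref{eq:norm}, so that the entire argument reduces to an elementary remark about convex combinations of two nonnegative reals. First I would fix a sphere $u+v\mathbb{S}\subset U$ and an arbitrary $J\in\mathbb{S}$, and apply Lemma \ref{eq:norm} to write
$$\big|f(u+vJ)\big|^2 =\frac{1+\langle I,J\rangle}{2}\big|f(u+vI)\big|^2+\frac{1-\langle I,J\rangle}{2}\big|f(u-vI)\big|^2.$$
The key observation is that the two coefficients $\tfrac{1\pm\langle I,J\rangle}{2}$ are precisely the barycentric weights of a genuine convex combination.

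To justify this, I would record the elementary fact that $\langle I,J\rangle$ ranges over $[-1,1]$: since $I$ and $J$ are unit $1$-vectors in $\mathbb{R}^n$, the Cauchy–Schwarz inequality gives $|\langle I,J\rangle|\leq |I|\,|J|=1$. Hence both weights are nonnegative and visibly sum to $1$, so $\big|f(u+vJ)\big|^2$ lies in the closed interval spanned by the two endpoint values. This yields
$$\min\!\Big(\big|f(u+vI)\big|^2,\big|f(u-vI)\big|^2\Big)\leq \big|f(u+vJ)\big|^2\leq \max\!\Big(\big|f(u+vI)\big|^2,\big|f(u-vI)\big|^2\Big)$$
for every $J\in\mathbb{S}$.

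Next I would check that these bounds are actually attained on the preserved slice $\mathbb{C}_I$, which is what upgrades the inequalities to equalities. Choosing $J=I$ gives $\langle I,J\rangle=1$ and recovers $\big|f(u+vI)\big|^2$, while $J=-I$ gives $\langle I,J\rangle=-1$ and recovers $\big|f(u-vI)\big|^2$. Taking the supremum, respectively the infimum, over $J\in\mathbb{S}$, and then extracting square roots—a monotone operation on nonnegative reals—produces the two asserted identities for $\max_{J}\big|f(u+vJ)\big|$ and $\min_{J}\big|f(u+vJ)\big|$.

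I do not expect any genuine obstacle here: all the substance is carried by Lemma \ref{eq:norm}, and the only point demanding a moment of care is the remark that, as $J$ varies over $\mathbb{S}$, the quantity $\langle I,J\rangle$ sweeps out the full interval $[-1,1]$ and in particular attains the endpoints $\pm 1$ at $J=\pm I$. This simultaneously guarantees that the combination is convex (so the modulus cannot exceed the larger endpoint value nor fall below the smaller) and that both extremes are realized on the slice, which is exactly the content of the corollary.
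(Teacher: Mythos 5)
Your proof is correct and takes essentially the same approach as the paper, which states the corollary as a direct consequence of Lemma \ref{eq:norm} without further detail: the convex-combination weights $\tfrac{1\pm\langle I,J\rangle}{2}$ are nonnegative and sum to $1$, so $|f(u+vJ)|^2$ lies between the two endpoint values, which are attained at $J=\pm I\in\mathbb S$. Your write-up simply makes this immediate deduction explicit, and it is sound.
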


We are now in a position to state the growth and distortion theorems for slice monogenic functions.

\begin{theorem}[Growth and Distortion Theorems for Paravectors]\label{th:DG-theorem1}
Let $f$ be a slice monogenic function on $\mathbb B$ such that its restriction $f_I$ to $\mathbb B_I$ is injective and $f(\mathbb B_I)\subseteq \mathbb C_I $ for some $I\in \mathbb S $. If $f(0)=0$ and $f'(0)=1$, then for all  $ x\in \mathbb B$, the following inequalities hold:
\begin{eqnarray}\label{eq:131}
\frac{|x|}{(1+|x|)^2}\leq |f(x)|\leq \frac{|x|}{(1-|x|)^2};
\end{eqnarray}
\begin{eqnarray}\label{eq:132}
\frac{1-|x|}{(1+|x|)^3}\leq |f'(x)|\leq \frac{1+|x|}{(1-|x|)^3} ;
\end{eqnarray}
\begin{eqnarray}\label{eq:133}
\frac{1-|x|}{1+|x|}\leq \big|xf'(x)\ast f^{-\ast}(x)\big|\leq \frac{1+|x|}{1-|x|}.
\end{eqnarray}
Moreover, equality holds for one of these six inequalities at some point $x_0\in \mathbb B\setminus\{0\}$ if and only if $f$ is of the form $$f(x)=x(1-xe^{I\theta})^{-\ast2},\quad\forall\,\, x \in \mathbb B,$$
for some $ \theta \in \mathbb R$.
\end{theorem}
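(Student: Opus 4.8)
The plan is to reduce everything to the classical one-variable result (Theorem \ref{Th:dg-theorem}) applied to the slice restriction $f_I$, and then transport each bound from the slice $\mathbb{C}_I$ to the whole ball $\mathbb{B}$ by means of the convex combination identity of Lemma \ref{eq:norm}. The starting observation is that, because $f(0)=0$, $f'(0)=1$, and $f_I$ is injective with $f(\mathbb{B}_I)\subseteq\mathbb{C}_I\cong\mathbb{C}$, the restriction $f_I$ is exactly a normalized univalent holomorphic function on $\mathbb{B}_I\cong\mathbb{D}$, so the three classical pairs (\ref{eq:1})--(\ref{eq:3}) hold for $f_I$ at every $z\in\mathbb{B}_I$.

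First I would record that not only $f$ but also $f'$ and $g:=\mathrm{id}\ast f'\ast f^{-\ast}$ preserve the slice $\mathbb{C}_I$. Writing $f(x)=\sum_k x^k a_k$ with $a_k\in\mathbb{C}_I$ (Remark \ref{R-extension}), the slice derivative has coefficients $(k+1)a_{k+1}\in\mathbb{C}_I$ and restricts to the complex derivative $f_I'$ on $\mathbb{B}_I$; the conjugate $f^c$, the symmetrization $f^s$, and hence $f^{-\ast}=(f^s)^{-1}f^c$ all have coefficients in $\mathbb{C}_I$, and since the $\ast$-product restricts to ordinary multiplication on the commutative plane $\mathbb{C}_I$, the function $g$ restricts on $\mathbb{B}_I$ to $z f_I'(z)/f_I(z)$. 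Univalence together with $f_I(0)=0$ forces $\mathcal{Z}_{f^s}\cap\mathbb{B}=\{0\}$, so $g$ is defined on $\mathbb{B}\setminus\{0\}$. Thus each of $f$, $f'$, $g$ satisfies the hypothesis of Lemma \ref{eq:norm}.

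Next, fixing $x=u+vJ\in\mathbb{B}$ and setting $y=u+vI$, $\bar y=u-vI$ so that $|y|=|\bar y|=|x|$, I would apply the convex combination identity (\ref{eq: modulus12}) to $f$ (respectively to $f'$ and to $g$): this writes $|f(x)|^2$ as a convex combination of $|f(y)|^2$ and $|f(\bar y)|^2$ with weights $\tfrac{1\pm\langle I,J\rangle}{2}\in[0,1]$. Since the classical bounds (\ref{eq:1}) (respectively (\ref{eq:2}), (\ref{eq:3})) evaluated at $y$ and at $\bar y$ depend only on $|x|$ and sandwich both $|f(y)|^2$ and $|f(\bar y)|^2$, they sandwich their convex combination as well; taking square roots yields (\ref{eq:131})--(\ref{eq:133}) at $x$.

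The main work is the equality analysis, which I expect to be the only delicate point. Suppose equality holds at some $x_0=u_0+v_0J_0\neq0$ in, say, the upper growth bound, and write the corresponding convex combination with weight $\lambda=\tfrac{1+\langle I,J_0\rangle}{2}$. If $J_0\neq\pm I$ then $\lambda\in(0,1)$, and a convex combination of two numbers each bounded by $\bigl(\tfrac{|x_0|}{(1-|x_0|)^2}\bigr)^2$ can equal that bound only when both $|f(y_0)|^2$ and $|f(\bar y_0)|^2$ attain it; if $J_0=\pm I$ then $x_0$ already lies on $\mathbb{C}_I$ and the bound is attained at $y_0$ or $\bar y_0$ directly. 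In every case equality is forced in the classical inequality at the nonzero point $y_0$ (and/or $\bar y_0$), whence Theorem \ref{Th:dg-theorem} forces $f_I$ to be a rotation of the Koebe function $f_I(z)=z(1-ze^{I\theta})^{-2}$; by uniqueness of the slice monogenic extension (Remark \ref{R-extension}) this gives $f(x)=x(1-xe^{I\theta})^{-\ast2}$. The same argument, applied to $f'$ and to $g$, disposes of the remaining five inequalities, since each of the six classical equalities already forces $f_I$ to be Koebe; here the essential bookkeeping is to funnel all six cases through the extremal-convex-combination step into a single classical equality, which is exactly why verifying that $f'$ and $g$ preserve $\mathbb{C}_I$ (so that Lemma \ref{eq:norm} applies to them, not only to $f$) is indispensable. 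The converse—that this $f$ realizes equality at the appropriate point of $\mathbb{B}_I$—is immediate from the classical equality statement.
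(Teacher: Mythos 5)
Your proposal is correct and follows essentially the same route as the paper's own proof: apply the classical Theorem \ref{Th:dg-theorem} to $f_I$, transfer each bound to all of $\mathbb B$ via the convex combination identity of Lemma \ref{eq:norm} (after checking that $f'$ and $xf'(x)\ast f^{-\ast}(x)$ also preserve the slice $\mathbb C_I$, so the lemma applies to them too), and push equality at $x_0$ back to equality at a nonzero point of $\mathbb B_I$ by extremality of the convex combination. Your handling of the delicate points---the case distinction $J_0=\pm I$ versus $J_0\neq\pm I$ in the equality analysis, and the verification that $f^{-\ast}$ exists with $\mathcal{Z}_{f^s}\cap\mathbb B=\{0\}$---is, if anything, slightly more explicit than the paper's.
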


\begin{proof}
Notice that
$f_I:\mathbb B_I\rightarrow \mathbb C_I $ is a univalent   function
by our assumption.
Theorem \ref{Th:dg-theorem} with $F$ replaced by $f_I$    implies  that the following inequalities
\begin{eqnarray}\label{eq: Growth}
\frac{|z|}{(1+|z|)^2}\leq |f(z)|\leq \frac{|z|}{(1-|z|)^2},
\end{eqnarray}
\begin{eqnarray}\label{eq: Distortion}
\frac{1-|z|}{(1+|z|)^3}\leq |f'(z)|\leq \frac{1+|z|}{(1-|z|)^3},
\end{eqnarray}
and
\begin{eqnarray}\label{eq: Distortion-Growth}
\frac{1-|z|}{1+|z|}\leq \bigg|\frac{zf'(z)}{f(z)}\bigg|\leq \frac{1+|z|}{1-|z|}
\end{eqnarray}
hold for every $z=u+vI\in \mathbb B_I.$ On the other hand, it follows from Lemma \ref{eq:norm} that
$$\big|f(x)\big|^2=\frac{1+\langle I,J\rangle}{2}\big|f(z)\big|^2+
\frac{1-\langle I,J\rangle}{2}\big|f(\bar z)\big|^2 $$
holds for every  $ x=u+vJ \in \mathbb B$. Since (\ref{eq: Growth}) holds for all $z=u+vI$, $\bar{z}=u-vI\in \mathbb B_I$, it immediately follows that inequalities in (\ref{eq:131}) hold for all  $ x=u+vJ \in \mathbb B$, in virtue of the convex combination identity above.   Inequalities in (\ref{eq:132}) can be proved in the same manner, since the condition that $f'(\mathbb B_I)\subseteq \mathbb C_I$ holds trivially so that Lemma \ref{eq:norm} can be used.

Now it remains to prove inequalities in $(\ref{eq:133})$. To this end, we first need to show that  the slice monogenic function  $xf'(x)\ast f^{-\ast}(x)$ is well-defined on the whole ball $\mathbb B$. We proceed as follows. First of all, since $f(0)=0$, by considering Taylor expansion of $f$ at the origin $0$ (see Theorem \ref{eq:Taylor}) and using the Cauchy-Hadamard formula for radius of convergence of power series (which is valid in the situation here by following the classical proof and making use of (\ref{mult-norm})), or by Remark \ref{R-extension}, we can write
\begin{equation}\label{f-g relation}
f(x)=xg(x),
\end{equation}
where $g$ is a slice monogenic function on $\mathbb B$.  This together with the injectivity of $f_I$  and $f'(0)=1$ implies that $g$ has no zeros on $\mathbb B_I$. Moreover,  $g$ maps $\mathbb B_I$ into $\mathbb C_I$, since $f$ does by our assumption.
Secondly, again from the assumption that $f(\mathbb B_I)\subseteq \mathbb C_I,$ i.e. all the coefficients of the Taylor expansion of $f$ at the origin belong to the complex plane $\mathbb C_I$, it follows that $$f_I^c(z)=\overline{f_I(\bar{z})},$$
and hence
\begin{equation}\label{def-fg}
N(f)_I(z)=f_I(z)\overline{f_I(\bar{z})}=z^2g_I(z)\overline{g_I(\bar{z})}=z^2N(g)_I(z).
\end{equation}
This implies that $N(f)(\mathbb B_I)\subseteq \mathbb C_I$.
 Furthermore, since $g$ maps $\mathbb B_I$ into $\mathbb C_I$ and has no zeros on $\mathbb B_I$, we obtain that $g^s_I$ is exactly
$g_I\overline{g_I(\, \bar{\cdot}\,)}$ and is zero free on $\mathbb B_I$. Thus it follows from Remark \ref{remarks on SM-Inverse} (ii) and \cite[Remark 2.6.8 and Lemma 2.5.12]{Co2} that $g^s$ is zero free on $\mathbb B$ as well. This together with the fact obtained easily from (\ref{f-g relation}) that
\begin{equation}\label{f-g symmetrization}
f^s(x)=x^2g^s(x), \qquad \forall\, x\in\mathbb B,
\end{equation}
implies that $0$ is the only zero of $f^s$. Therefore, according to Definition \ref{de: R-Inverse}, $f^{-\ast}$ and  $g^{-\ast}$ can be defined on $\mathbb B\setminus\{0\}$ and $\mathbb B$, respectively. Finally, in view of (\ref{f-g relation}),
\begin{equation}\label{f-g conjuagtion}
f^c(x)=xg^c(x), \qquad \forall\, x\in\mathbb B,
\end{equation}
from which and (\ref{f-g symmetrization}) it follows that
  the   relation
$$xf'(x)\ast f^{-\ast}(x)=(f'\ast g^{-\ast})(x)$$ holds for all $x\in \mathbb B\setminus\{0\}$. Since  the right-hand side is well-defined on the whole ball $\mathbb B$, the left-hand side can extend regularly to the whole ball $\mathbb B$, as desired.

Notice also that $xf'(x)\ast f^{-\ast}(x)$ is just the slice monogenic extension to $\mathbb B$ of the holomorphic function $z f'_I(z)/f_I(z)$, which also maps the unit disk $\mathbb B_I$ into $\mathbb C_I$. Now inequalities in $(\ref{eq:133})$ immediately follow from $(\ref{eq: Distortion-Growth})$ and
\begin{eqnarray*}
\Big|xf'\ast f^{-\ast}(x)\Big|^2 =\frac{1+\langle I,J\rangle}{2}\bigg|\frac{zf'(z)}{f(z)}\bigg|^2+\frac{1-\langle I,J\rangle}{2}\bigg|\frac{\bar{z}f'(\bar{z})}{f(\bar{z})}\bigg|^2,
\end{eqnarray*}
in view of  Lemma \ref{eq:norm}.

Furthermore, if equality holds for one of six inequalities in (\ref{eq:131}), (\ref{eq:132}) and (\ref{eq:133}) at some point $x_0=u_0+v_0J\neq 0$ with $J \in \mathbb S$, then the corresponding equality also holds at $z_0=u_0+v_0I$ or $\bar{z}_0=u_0-v_0I$. Then from Theorem \ref{Th:dg-theorem}, we obtain that
$$f_I(z)=\frac{z}{(1-e^{I\theta} z)^2}\ ,\qquad \forall\,\, z\in \mathbb B_I,$$ for some $ \theta \in \mathbb R$, which implies
$$f(x)=x(1-xe^{I\theta})^{-\ast2},\qquad\forall \,\,x \in \mathbb B.$$
The converse part is obvious.
Now the proof is complete.
\end{proof}

\begin{remark}
The right-hand inequalities in $(\ref{eq:131})$ and $(\ref{eq:132})$ can follow alternatively from the well-known and highly non-trivial Bieberbach-de Branges theorem for univalent functions on the open unit disk $\mathbb D\subset\mathbb C$.
\end{remark}

Let  $F: \mathbb D\rightarrow \mathbb C$ be a  univalent  function on the unit disc $\mathbb D$ of the complex  plane with Taylor expansion
$$F(z)=z+\sum_{m=2}^\infty z^m a_m, \qquad a_m\in \mathbb C.$$
We consider the  canonical  imbedding $\mathbb C\subset \mathbb  R^{n+1}$ by expanding the basis $\{1, i\}$ of $\mathbb C$ to the basis $\{1, e_1, \ldots, e_n \}$ of $\mathbb R^{n+1}$ with $e_1=i$. Therefore we can construct a natural extension of $F$ to $\mathbb B$ by setting
$$f(x)=x+\sum_{m=2}^\infty x^m a_m, \qquad x\in \mathbb B.$$
It is evident that
 $f$ is a slice monogenic function on the open unit ball $\mathbb B=B(0,1)$ such that its restriction $\left.f\right|_{\mathbb D}=F$  is injective and satisfies that $F(\mathbb D)\subseteq \mathbb C $. Clearly,  $f(0)=0$ and $f'(0)=1$.
 Thus $f$ satisfies all  the assumptions of Theorem
\ref{th:DG-theorem1} and thus Theorem \ref{main-thm-Clifford} immediately follows.

\begin{remark}

The slice monogenic extension of  holomorphic functions on the unit disc $\mathbb D$ of the complex plane can result in the theory of slice monogenic elementary functions. We refer  to  \cite{Co2} for the corresponding functional calculus and applications.

\end{remark}

The following proposition is of independent interest.
\begin{proposition}\label{injectivity}
Let $f$ be a slice monogenic function on a symmetric slice domain  $U \subseteq \mathbb R^{n+1} $ such that its restriction $f_I$ to $U_I$ is injective and $f(U_I)\subseteq \mathbb C_I $ for some $I\in \mathbb S $. Then the restriction $f_J:U_J\rightarrow \mathbb R_n$ is also injective for every $J\in \mathbb S$.
\end{proposition}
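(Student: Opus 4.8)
The plan is to reduce injectivity of $f_J$ to that of $f_I$ by inverting the Representation Formula. Since the cases $J=\pm I$ are trivial (here $U_{\pm I}=U_I$, and $f_{-I}$ is injective exactly when $f_I$ is, because $\mathbb C_{-I}=\mathbb C_I$), I would assume $J\neq\pm I$, so that $t:=\langle I,J\rangle\in(-1,1)$ and $I\wedge J\neq 0$ in view of $|I\wedge J|^2=1-t^2$ (cf. (\ref{eq:identity4})).

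First I would record Theorem \ref{eq:formula} in the compact form
$$f(u+vJ)=\tfrac12(1-JI)\,f(u+vI)+\tfrac12(1+JI)\,f(u-vI),$$
valid for every $u+vJ\in U_J$ with $u,v\in\mathbb R$, where axial symmetry of $U$ guarantees $u\pm vI\in U_I$. Writing $p=f(u+vI)$ and $q=f(u-vI)$, the hypothesis $f(U_I)\subseteq\mathbb C_I$ gives $p,q\in\mathbb C_I$, so the entire value $f(u+vJ)$ is the image of the pair $(p,q)$ under the fixed real-linear map $L(p,q)=\tfrac12(1-JI)p+\tfrac12(1+JI)q$ from $\mathbb C_I\times\mathbb C_I$ into $\mathbb R_n$.

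The key step is to show that $L$ is injective. Using $JI=-t-I\wedge J$, I would rewrite $L(p,q)=\tfrac12\big[(1+t)p+(1-t)q\big]+\tfrac12(I\wedge J)(p-q)$. The first summand lies in $\mathrm{span}_{\mathbb R}\{1,I\}=\mathbb C_I$; for the second, writing $p-q=\alpha+\beta I$ with $\alpha,\beta\in\mathbb R$ gives $(I\wedge J)(p-q)=\alpha(I\wedge J)+\beta(I\wedge J)I$, and a direct computation yields $(I\wedge J)I=-I(I\wedge J)$. By the orthogonality Fact used in the proof of Lemma \ref{eq:norm}, the four elements $1,\,I,\,I\wedge J,\,I(I\wedge J)$ are mutually orthogonal, and they are nonzero since $J\neq\pm I$ (note $|I(I\wedge J)|=|I\wedge J|>0$ by (\ref{mult-norm})); hence they are linearly independent. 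Therefore $L(p,q)=0$ forces both $(1+t)p+(1-t)q=0$ and $\alpha=\beta=0$; the latter gives $p=q$, whence $2p=(1+t)p+(1-t)p=0$ and so $p=q=0$. Thus $L$ is injective.

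Finally I would conclude as follows. If $x_k=u_k+v_kJ\in U_J$ $(k=1,2)$ satisfy $f(x_1)=f(x_2)$, then $L(p_1,q_1)=L(p_2,q_2)$ with $p_k=f(u_k+v_kI)$ and $q_k=f(u_k-v_kI)$; injectivity of $L$ yields $f(u_1+v_1I)=f(u_2+v_2I)$, and injectivity of $f_I$ then forces $u_1+v_1I=u_2+v_2I$, i.e. $u_1=u_2$ and $v_1=v_2$. Hence $x_1=x_2$, so $f_J$ is injective. I expect the only genuine obstacle to be the linear-algebra step establishing injectivity of $L$ — specifically verifying that $I\wedge J$ and $I(I\wedge J)$ are nonzero and orthogonal to $\mathbb C_I$ — which is exactly where the orthogonality Fact underlying Lemma \ref{eq:norm} does the work.
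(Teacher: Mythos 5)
Your proposal is correct and takes essentially the same route as the paper's own proof: both invert the Representation Formula, observing that for $J\neq\pm I$ a relation $a+JIb=0$ with $a,b\in\mathbb C_I$ forces $a=b=0$ (equivalently, your map $L$ is injective), and then reduce to the injectivity of $f_I$. The only difference is presentational: you justify this linear-independence step explicitly via the orthogonality of $\{1,\,I,\,I\wedge J,\,I(I\wedge J)\}$ and the identity $(I\wedge J)I=-I(I\wedge J)$, whereas the paper simply asserts the corresponding independence.
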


\begin{proof}
Suppose that there are two points $x=\alpha+\beta J$ and $y=\gamma+\delta J$ such that $f(x)=f(y)$, it suffices to prove that $x=y$. If $J=\pm I$, the result follows from the assumption. Otherwise, from Theorem \ref{eq:formula} one can deduce that
\begin{equation*}
f(x)=\frac{1}{2}\big(f(z)+f(\bar z)\big)-\frac{1}{2}JI\big(f(z)-f(\bar z)\big)
\end{equation*}
and
\begin{equation*}
f(y)=\frac{1}{2}\big(f(w)+f(\bar w)\big)-\frac{1}{2}JI\big(f(w)-f(\bar w)\big).
\end{equation*}
Here $z=\alpha+\beta I$ and $w=\gamma+\delta I$ for the given $I\in \mathbb S$.
Therefore,
$$\Big(\big(f(z)+f(\bar z)\big)-\big(f(w)+f(\bar w)\big)\Big)-
JI\Big(\big(f(z)-f(\bar z)\big)-\big(f(w)-f(\bar w)\big)\Big)=0.$$
Since $f(U_I)\subseteq \mathbb C_I$, 1 and $J$ are linearly independent on $\mathbb C_I$ we obtain that
$$f(z)+f(\bar z)=f(w)+f(\bar w)$$
and
$$f(z)-f(\bar z)=f(w)-f(\bar w),$$
which implies that
$f(z)=f(w)$. Thus it follows from the injectivity of $f_I$ that $z=w$ and consequently, $x=y$.
\end{proof}
\begin{remark}
Let $f$ be as described in Theorem \ref{th:DG-theorem1}. Then $f_J:\mathbb B_J\rightarrow \mathbb R_n$ is injective for any $J\in \mathbb S$ by the preceding proposition. Unfortunately, the authors do not know whether $f:U\rightarrow \mathbb R_n$ is injective.
\end{remark}
\section{Growth, Distortion and Covering Theorems for  Slice Regular Functions} \label{GDTs for slice regularity}
Let $\mathbb H$ denote the non-commutative, associative, real algebra of quaternions with standard basis $\{1,\,i,\,j, \,k\}$,  subject to the multiplication rules
$$i^2=j^2=k^2=ijk=-1.$$
Let $\langle$ , $\rangle$ denote  the standard inner product on $\mathbb H\cong\mathbb R^4$, i.e.
$$\langle p,q\rangle={\rm{Re}}(p\bar{q})=\sum\limits_{n=0}^3x_ny_n$$
for any $p=x_0+x_1i+x_2j+x_3k$, $q=y_0+y_1i+y_2j+y_3k\in \mathbb H$.

In this section, we shall consider  slice regular functions defined on domains in  quaternions $\mathbb H$ with values also in $\mathbb H$. These functions are not slice  monogenic functions  obtained by setting $n=2$ in the Clifford algebra $\mathbb R_n$. Such a class of functions enjoys many nice properties similar to those of classical holomorphic functions of one complex variable.
For example, the open mapping theorem holds for slice regular functions on symmetric slice domains in $\mathbb H$, but fails for  slice  monogenic functions even in the quaternionic setting. A simple counterexample is the imbedding  map $\imath: \mathbb R^3\hookrightarrow \mathbb R_2\simeq\mathbb H$. The open mapping theorem allows us to prove the Koebe type one-quarter theorem (see Theorem \ref{th:Koebe-theorem} below).  Furthermore,  only in the quaternionic setting, we have  an explicit formula to express of the regular product and regular quotient in terms of  the usual pointwise product and quotient. It is exactly this explicit formula which plays a crucial role in many arguments, see the monograph \cite{GSS} and the recent papers \cite{RW2, Wang} for more details. In higher dimensions, the formulas to express slice product and slice quotient in  terms of the usual pointwise ones hold true only under some special cases, see \cite[Corollary 3.5 and Theorem 3.7]{Ghiloni2} for details.
This phenomenon distinguishes in a certain sense quaternions from other real alternative algebras.

To introduce the theory of slice regular functions, we will denote by $\mathbb S$ the unit $2$-sphere of purely imaginary quaternions, i.e.,
$$\mathbb S=\Big\{q\in\mathbb H:q^2=-1\Big\}.$$
For every $I \in \mathbb S $ we will denote by $\mathbb C_I$ the plane $ \mathbb R \oplus I\mathbb R $, isomorphic to $ \mathbb C$, and, if $\Omega \subseteq \mathbb H$, by $\Omega_I$ the intersection $ \Omega \cap \mathbb C_I $. Also, we will denote by $\mathbb B$ the open unit ball centred at the origin in $\mathbb H$, i.e.,
$$\mathbb B=\Big\{q \in \mathbb H:|q|<1\Big\}.$$

We can now recall the definition of slice regularity.
\begin{definition} \label{de: regular} Let $\Omega$ be a domain in $\mathbb H$. A function $f :\Omega \rightarrow \mathbb H$ is called \emph{slice} \emph{regular} if, for all $ I \in \mathbb S$, its restriction $f_I$ to $\Omega_I$ is \emph{holomorphic}, i.e., it has continuous partial derivatives and satisfies
$$\bar{\partial}_I f(x+yI):=\frac{1}{2}\left(\frac{\partial}{\partial x}+I\frac{\partial}{\partial y}\right)f_I (x+yI)=0$$
for all $x+yI\in \Omega_I $.
 \end{definition}

The notions of slice domain, of symmetric slice domain and of slice derivative are similar to those already given in Section 2. Moreover, the corresponding results still hold for the slice regular functions in the setting of quaternions, such as the splitting lemma, the representation formula, the power series expansion and so on.

Now we can establish  the following result by some obvious modifications of the proof of Proposition \ref{eq:C-version}.

\begin{proposition}\label{eq:modulus150}
Let $f$ be a slice regular function on a symmetric slice domain  $\Omega \subseteq \mathbb H$. Then for every $q=x+yJ \in \Omega $ and every $I\in \mathbb S$,  there   holds the identity
\begin{equation}\label{eq:identity of quaternionic version}
\big|f(q)\big|^2=\frac{1+\langle I,J\rangle}{2}\big|f(z)\big|^2+
\frac{1-\langle I,J\rangle}{2}\big|f(\bar z)\big|^2-
\Big\langle {\rm{Im}} \big(f(z)\overline{f(\bar z)}\big),\, I\wedge J\Big\rangle,
\end{equation}
where  $z=x+yI$ and $\bar z=x-yI$.
\end{proposition}

Before presenting the key ingredient of   establishing the growth and distortion theorems, we first make an equivalent characterization of the vanishing of the third term on the right-hand side of (\ref{eq:identity of quaternionic version}), thanks to the speciality of quaternions.
\begin{theorem}\label{eq:SN-Condition}
Let $f$ be a slice regular function on a symmetric slice domain  $\Omega \subseteq \mathbb H$ and let $I\in\mathbb S$. Then
$$\Big\langle {\rm{Im}} \big(f(z)\overline{f(\bar z)}\big), \,I\wedge J\Big\rangle=0$$
for all $J\in \mathbb S$ and all $z\in \Omega_I$ if and only if there exist $u\in \partial\mathbb B$ and a slice regular function $g$ on $\Omega$ that preserves the slice $\Omega_I$ such that
$$f(q)=g(q)u$$
on $\Omega$.
\end{theorem}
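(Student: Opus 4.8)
The plan is to characterize when the ``extra term''
$$\big\langle {\rm{Im}}\big(f(z)\overline{f(\bar z)}\big),\, I\wedge J\big\rangle$$
vanishes for all $J\in\mathbb S$ and all $z\in\Omega_I$. The first observation I would make is that $I\wedge J$ ranges over a full set of directions as $J$ varies over $\mathbb S$: since $I\wedge J=\frac12(IJ-JI)$ is purely imaginary and orthogonal to $I$, letting $J$ run over $\mathbb S$ produces every purely imaginary quaternion orthogonal to $I$ (up to scaling). Combined with the trivial fact that ${\rm{Im}}(f(z)\overline{f(\bar z)})$ is already purely imaginary, and that its component along $I$ needs separate handling, I would argue that the vanishing condition for all $J$ forces ${\rm{Im}}(f(z)\overline{f(\bar z)})$ to be \emph{parallel to $I$} for every $z\in\Omega_I$. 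This is the crucial reduction: the seemingly multi-directional condition collapses to a single scalar statement, namely that $f(z)\overline{f(\bar z)}\in\mathbb C_I$ for all $z\in\Omega_I$.

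**Reformulating in the slice.** Once we know $f(z)\overline{f(\bar z)}\in\mathbb C_I$ on $\Omega_I$, I would translate this into a statement about the splitting of $f_I$. By the Splitting Lemma, write $f_I(z)=F(z)+G(z)\jmath$ where $F,G:\Omega_I\to\mathbb C_I$ are holomorphic and $\jmath\in\mathbb S$ is orthogonal to $I$ (so $\{1,\jmath\}$ is a complex-orthonormal frame over $\mathbb C_I$). The condition $f(z)\overline{f(\bar z)}\in\mathbb C_I$ becomes, after expanding the product and using $f_I^c(z)=\overline{f_I(\bar z)}$ together with the representation-formula identities, a holomorphic functional equation linking $F$ and $G$. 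The goal is to extract from it that $G$ is a \emph{constant} multiple of $F$, i.e. $G=\lambda F$ for some $\lambda\in\mathbb C_I$; this is exactly what is needed to factor $f_I(z)=F(z)(1+\lambda\jmath)$ and hence $f(q)=g(q)u$ with $g$ preserving the slice $\Omega_I$ and $u=(1+\lambda\jmath)/|1+\lambda\jmath|\in\partial\mathbb B$.

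**The main obstacle.** The hard part will be the implication from the vanishing condition to the factorization $f=gu$: I expect this to require an identity-principle or analytic-continuation argument rather than pure algebra. The functional equation one obtains is of the form $F\overline{G}-G\overline{F}\equiv 0$ (an imaginary-part condition read through the $\jmath$-component), but $\overline{G(z)}$ is \emph{antiholomorphic} in $z$, so one cannot immediately conclude $G/F$ is constant by holomorphy alone. The clean way around this is to replace $\overline{G(z)}$ by $G^c(z):=\overline{G(\bar z)}$, which \emph{is} holomorphic, using that the symmetry of $\Omega_I$ under conjugation (axial symmetry of $\Omega$) makes this substitution legitimate. The relation then becomes a genuine holomorphic identity $F\,G^c-G\,F^c\equiv 0$ on $\Omega_I$; away from the zeros of $F$ this gives $(G/F)=(G/F)^c$, forcing $G/F$ to be a conjugation-invariant holomorphic function, and a short further argument (using that it must also be constant along each slice to be compatible across all of $\Omega$) pins it down to a constant $\lambda\in\mathbb C_I$. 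The zeros of $F$ are handled by the Identity Principle (Theorem~\ref{th:IP-theorem}).

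**The converse.** The reverse implication is routine and I would dispatch it quickly: if $f(q)=g(q)u$ with $g(\Omega_I)\subseteq\mathbb C_I$ and $u\in\partial\mathbb B$, then for $z\in\Omega_I$ we have $f(z)\overline{f(\bar z)}=g(z)u\,\overline{u}\,\overline{g(\bar z)}=g(z)\overline{g(\bar z)}$ since $|u|=1$; as $g$ preserves $\mathbb C_I$ this product lies in $\mathbb C_I$, so its imaginary part is parallel to $I$ and therefore orthogonal to every $I\wedge J$. This yields the vanishing of the inner product for all $J\in\mathbb S$, completing the equivalence.
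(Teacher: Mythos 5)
Your overall route is the same as the paper's: reduce the hypothesis to the single condition $f(z)\overline{f(\bar z)}\in\mathbb C_I$ for all $z\in\Omega_I$ (this reduction is correct, since $\{I\wedge J: J\in\mathbb S\}$ spans the orthogonal complement of $\mathbb C_I$ in $\mathbb H$), split $f_I=F+GK$ with $K\perp I$, derive a holomorphic functional equation linking $F$ and $G$, conclude that one is a constant $\mathbb C_I$-multiple of the other, and then assemble $g$ and $u$; your verification of the converse is also correct. The genuine gap is in the key step: the functional equation you derive is not the right one. Expanding the product with the quaternionic rules $\overline{a+bK}=\bar a-bK$ and $Kc=\bar cK$ for $a,b,c\in\mathbb C_I$ gives
\begin{equation*}
f(z)\overline{f(\bar z)}=\Big(F(z)\overline{F(\bar z)}+G(z)\overline{G(\bar z)}\Big)+\Big(G(z)F(\bar z)-F(z)G(\bar z)\Big)K,
\end{equation*}
so the condition $f(z)\overline{f(\bar z)}\in\mathbb C_I$ is equivalent to
\begin{equation*}
F(z)G(\bar z)=F(\bar z)G(z),\qquad z\in\Omega_I,
\end{equation*}
in which the values at the reflected point $\bar z$ appear \emph{without} complex conjugation. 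This form is exactly what makes the argument work: away from the (discrete) zero set it says $h(z)=h(\bar z)$ for $h=F/G$, i.e.\ the holomorphic function $h$ coincides with an anti-holomorphic function of $z$, hence $h$ is constant. That is the paper's argument.

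Your relation $F\,G^c-G\,F^c\equiv 0$, i.e.\ $F(z)\overline{G(\bar z)}=G(z)\overline{F(\bar z)}$, is a different and strictly weaker condition: it says $h=h^c$, i.e.\ $h$ has real Taylor coefficients, and that does \emph{not} force $h$ to be constant. Concretely, $F(z)=1$, $G(z)=z$ satisfies your relation with $G/F=z$ non-constant; and for the corresponding function $f_I(z)=1+zK$ one computes $f(z)\overline{f(\bar z)}=1+z^2+(z-\bar z)K$, whose $K$-component is nonzero off the real axis, so this $f$ does \emph{not} satisfy the hypothesis of the theorem --- your equation fails to detect this, which shows it cannot be the correct translation of the hypothesis. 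Consequently the ``short further argument'' you defer to (upgrading $h=h^c$ to constancy) cannot exist. The repair is simply to carry out the quaternionic expansion carefully: conjugating $G(\bar z)K$ yields $-G(\bar z)K$ (no bar on $G(\bar z)$), and moving $K$ past $\overline{F(\bar z)}$ removes the bar; the resulting equation $F(z)G(\bar z)=F(\bar z)G(z)$ then gives constancy of the ratio at once, after which your construction of $u=(1+|\lambda|^2)^{-1/2}(\lambda+K)$ and of $g$ as the slice regular extension of the remaining factor (handling zeros via the Identity Principle, and the degenerate case where one of $F,G$ vanishes identically) goes through as in the paper.
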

\begin{proof}
 We only prove the necessity, since the sufficiency is obvious.
Let $f_I(z)=F(z)+G(z)K$ be the splitting of $f$ with $I, K\in\mathbb S$ and  $I\perp K$, and $F, G:\Omega_I\rightarrow\mathbb C_I$ two holomorphic functions. Take $L\in \mathbb S$ such that $\{1,I,K,L\}$ is an orthonormal basis of quaternions $\mathbb H$ and let $V$ denote the real vector space generated by the set  $\big\{I\wedge J:J\in \mathbb S\big\}$. Then it is clear that
\begin{eqnarray}\label{de:V}
V=K\mathbb R\oplus L\mathbb R.
\end{eqnarray}
Moreover, a simple calculation gives
$$f(z)\overline{f(\bar z)}=\Big(F(z)\overline{F(\bar z)}+G(z)\overline{G(\bar z)}\Big)+\Big(F(\bar z)G(z)-F(z)G(\bar z)\Big)K,$$
from which and (\ref{de:V}) it follows that
$$\Big\langle {\rm{Im}} \big(f(z)\overline{f(\bar z)}\big), I\wedge J\Big\rangle=0,\qquad \forall \,J\in \mathbb S,$$ if and only if
\begin{eqnarray}\label{eq:FG}
F(z)G(\bar{z})=F(\bar{z})G(z),\qquad \forall \,z\in \Omega_I.
\end{eqnarray}
If $G\equiv 0$ on $\Omega_I$, there is nothing  to prove and  the desired result follows. Otherwise, $G\not\equiv 0$, let $\mathcal{Z}_G$ denote the zero set of $G$, then by the identity principle $\mathcal{Z}_G$ has no accumulation points in $\Omega_I$ and so does $\mathcal{\overline{Z}}_G:=\big\{\bar z\in \Omega_I: z\in \mathcal{Z}_G \big\}$. Therefore by (\ref{eq:FG}),
$$\frac{F(z)}{G(z)}=\frac{F(\bar{z})}{G(\bar{z})}$$
is both holomorphic and anti-holomorphic on $\Omega_I\setminus \big(\mathcal{Z}_G\cup\mathcal{\overline{Z}}_G\big)$, which is still a domain of $\mathbb C_I$, thus there exists a constant $\lambda\in \mathbb C_I$ such that
$$\frac{F(z)}{G(z)}=\frac{F(\bar{z})}{G(\bar{z})}=\lambda,$$
which implies that
$F=\lambda G$ on $\Omega_I\setminus \big(\mathcal{Z}_G\cup\mathcal{\overline{Z}}_G\big)$ and hence on $\Omega_I$ by the identity principle.

Now let
$$g=:(1+|\lambda|^{2})^{\frac12}{\rm{ext}}(G),$$
and set $$u=:(1+|\lambda|^{2})^{-\frac12}(\lambda+K)\in \partial \mathbb B.$$
Then $g$ is a slice regular function on $\Omega$ such that $g(\Omega_I)\subseteq \mathbb C_I$  and $f=gu$, which completes the proof.
\end{proof}


As a direct consequence, we obtain

\begin{corollary}\label{eq:modulus relation151}
Let $I$ be an element of  $\mathbb S $ and $f$  a slice regular function on a symmetric slice domain  $\Omega \subseteq \mathbb H $.   Then the following convex combination identity
\begin{equation}\label{eq: modulus2}
\big|f(x+yJ)\big|^2 =\frac{1+\langle I,J\rangle}{2}\big|f(x+yI)\big|^2+
\frac{1-\langle I,J\rangle}{2}\big|f(x-yI)\big|^2
\end{equation}
holds for every $ x+yJ \in \Omega $ if and only if there exists some $u\in \partial\mathbb B$
such that $f(\Omega_I)\subseteq \mathbb C_Iu $.
\end{corollary}

In particular, each element $f$ from  the slice regular automorphism group of the open unit ball $\mathbb B$ of $\mathbb H$
$$\textrm{Aut}(\mathbb B)=\Big\{f(q)=(1-q\bar a)^{-\ast}\ast(q-a)u:a\in\mathbb B,\,u\in \partial\mathbb B\Big\}$$
satisfies  the condition that there exists some $u\in \partial\mathbb B$
such that $f(\Omega_I)\subseteq \mathbb C_Iu $ so that  equality (\ref{eq: modulus2}) holds for such an $f$.

From Corollary \ref{eq:modulus relation151}, we also conclude that  the maximum and  minimum moduli of every slice regular functions on a symmetric slice  domains in $\mathbb H$ that preserves one slice  are actually attained on its preserved slice.
\begin{corollary}\label{eq:modulus relation152}
Let $f$ be a slice regular function on a symmetric slice domain  $\Omega \subseteq \mathbb H $ such that $f(\Omega_I)\subseteq \mathbb C_I $ for some $I\in \mathbb S $.  Then for each sphere $ x+y\mathbb S \subset \Omega$, the following equalities hold:
\begin{equation}\label{max-ineq}
\max_{J\in \mathbb S}\big|f(x+yJ)\big|=\max\Big(\big|f(x+yI)\big|,\big|f(x-yI)\big|\Big),
\end{equation}
and
\begin{equation}\label{min-ineq}
\min_{J\in \mathbb S}\big|f(x+yJ)\big|=\min\Big(\big|f(x+yI)\big|,\big|f(x-yI)\big|\Big).
\end{equation}
Consequently,
\begin{equation}\label{Sup-ineq}
\sup_{q\in \Omega}|f(q)|=\sup_{z\in \Omega_I}|f(z)|,
\end{equation}
\begin{equation}\label{Inf-ineq}
\inf_{q\in \Omega}|f(q)|=\inf_{z\in \Omega_I}|f(z)|.
\end{equation}
\end{corollary}

\begin{remark}
Equalities $(\ref{max-ineq})$ and $(\ref{min-ineq})$ were first proved in \cite[Proposition 1.13]{Sarfatti} and \cite[Proposition 2.6]{dGS}.  Together with the classical growth and distortion theorems,  Corollary \ref{eq:modulus relation152} is sufficient to prove Theorem \ref{th:DG-theorem153} below even without Corollary  \ref{eq:modulus relation151}. Despite this trivial fact, Corollary \ref{eq:modulus relation151} is of independent interest and has its intrinsic value.
 It presents  additionally  a new convex combination identity (\ref{eq: modulus2}) and provides a sufficient and necessary condition under which (\ref{eq: modulus2}) holds identically.  This convex combination identity  is also quite useful for other purposes. For instance, it provides an effective approach to a quaternionic version of a well-known Forelli-Rudin estimate, which will play a fundamental role in the theory of various spaces of slice regular functions \cite{RX}.
\end{remark}

Now we come to state the growth and distortion theorems for slice regular functions.

\begin{theorem}[Growth and Distortion Theorems for Quaternions]\label{th:DG-theorem153}
Let $f$ be a slice regular function on $\mathbb B$ such that its restriction $f_I$ to $\mathbb B_I$ is injective and $f(\mathbb B_I)\subseteq \mathbb C_I $ for some $I\in \mathbb S $. If $f(0)=0$ and $f'(0)=1$, then for all  $ q\in \mathbb B$, the following inequalities hold:
\begin{eqnarray}\label{eq:111}
\qquad\ \frac{|q|}{(1+|q|)^2}\leq |f(q)|\leq \frac{|q|}{(1-|q|)^2};
\end{eqnarray}
\begin{eqnarray}\label{eq:112}
\qquad\ \frac{1-|q|}{(1+|q|)^3}\leq |f'(q)|\leq \frac{1+|q|}{(1-|q|)^3} ;
\end{eqnarray}
\begin{eqnarray}\label{eq:113}
\qquad\ \frac{1-|q|}{1+|q|}\leq \big|qf'(q)\ast f^{-\ast}(q)\big|\leq \frac{1+|q|}{1-|q|}.
\end{eqnarray}
Moreover, equality holds for one of these six inequalities at some point $q_0\in \mathbb B\setminus \{0\}$ if and only if $f$ is of the form $$f(q)=q(1-qe^{I\theta})^{-\ast2},\quad\forall\,\, q \in \mathbb B, $$
for some $ \theta \in \mathbb R$.
\end{theorem}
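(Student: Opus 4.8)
The plan is to run the proof of Theorem \ref{th:DG-theorem1} essentially verbatim, with the only structural change being that the Clifford-algebraic convex combination identity (Lemma \ref{eq:norm}) is replaced by its quaternionic counterpart (Lemma \ref{eq:modulus relation151}). First I would observe that by hypothesis $f_I\colon \mathbb{B}_I\to\mathbb{C}_I$ is a normalized univalent holomorphic function, so the classical Growth and Distortion Theorems (Theorem \ref{Th:dg-theorem}), applied with $f_I$ in place of $F$, deliver all three pairs of estimates \eqref{eq:1}--\eqref{eq:3} for every $z=x+yI\in\mathbb{B}_I$, and in particular for $\bar z=x-yI$ as well, since $|z|=|\bar z|=|q|$ on the sphere $x+y\mathbb{S}$.

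Next, since $f(\mathbb{B}_I)\subseteq\mathbb{C}_I$ (the case $u=1$ in Lemma \ref{eq:modulus relation151}), the convex combination identity
\[
\big|f(q)\big|^2=\frac{1+\langle I,J\rangle}{2}\big|f(z)\big|^2+\frac{1-\langle I,J\rangle}{2}\big|f(\bar z)\big|^2
\]
holds for every $q=x+yJ\in\mathbb{B}$. Because the two weights are nonnegative and sum to $1$, and because the lower and upper bounds in \eqref{eq:1} depend only on $|q|$, the bounds satisfied by $\big|f(z)\big|$ and $\big|f(\bar z)\big|$ are automatically inherited by $\big|f(q)\big|$; squaring the bounds, averaging, and taking square roots proves \eqref{eq:111}. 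The distortion estimate \eqref{eq:112} follows by the identical argument applied to $f'$, after noting that $f'(\mathbb{B}_I)\subseteq\mathbb{C}_I$ holds trivially (the slice derivative equals $\partial f/\partial x$, which preserves the slice), so that Lemma \ref{eq:modulus relation151} is again applicable.

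The subtler point, which I expect to be the main obstacle, is \eqref{eq:113}: one must first verify that $qf'(q)\ast f^{-\ast}(q)$ is regular on the whole ball $\mathbb{B}$, including at the origin where $f$ vanishes. Exactly as in the proof of Theorem \ref{th:DG-theorem1}, I would factor $f_I(z)=z\,g_I(z)$ with $g$ slice regular, zero-free on $\mathbb{B}_I$, and satisfying $g(\mathbb{B}_I)\subset\mathbb{C}_I$ (here the injectivity of $f_I$ guarantees $f_I$ has no zero other than the simple one at the origin). Then $(f_I\ast f_I^c)(z)=f_I(z)\overline{f_I(\bar z)}=z^2(g_I\ast g_I^c)(z)$, so $f^s$ has a single zero of order $2$ at the origin while $g^s$ is zero-free, and the identity $qf'(q)\ast f^{-\ast}(q)=(f'\ast g^{-\ast})(q)$ exhibits the left-hand side as a regular function on all of $\mathbb{B}$. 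Since this function preserves the slice $\mathbb{C}_I$ and restricts there to the classical quotient $z f_I'(z)/f_I(z)$, a third application of Lemma \ref{eq:modulus relation151} combined with \eqref{eq:3} yields \eqref{eq:113}.

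Finally, for the rigidity statement, suppose equality holds at some $q_0=x_0+y_0J\neq 0$. If $J=\pm I$ this is equality on the slice itself; otherwise both weights lie strictly in $(0,1)$, and a weighted average of two quantities, each bounded by a common value, attains that value only when both summands do, so the corresponding equality holds at both $z_0=x_0+y_0I$ and $\bar z_0=x_0-y_0I$. The rigidity in Theorem \ref{Th:dg-theorem} then forces $f_I$ to be a rotation of the Koebe function, and the uniqueness of slice regular extension (guaranteed by the identity principle) upgrades this to $f(q)=q(1-qe^{I\theta})^{-\ast 2}$ on all of $\mathbb{B}$; the converse implication is a direct computation.
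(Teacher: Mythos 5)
Your proposal is correct and follows essentially the same route as the paper: the paper establishes this theorem precisely by transplanting the proof of Theorem \ref{th:DG-theorem1} to the quaternionic setting, with the Clifford convex combination identity (Lemma \ref{eq:norm}) replaced by its quaternionic counterpart (Lemma \ref{eq:modulus relation151}), including the same factorization $f_I(z)=zg_I(z)$ to make sense of $qf'(q)\ast f^{-\ast}(q)$ on all of $\mathbb{B}$ and the same convexity argument for the equality case. Your explicit handling of the weights being strictly positive when $J\neq\pm I$ is a minor refinement of the paper's terser rigidity step, not a different method.
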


Let  $F: \mathbb D\rightarrow \mathbb C$ be a univalent   function on the unit disc $\mathbb D$ of the complex  plane with Taylor expansion
$$F(z)=z+\sum_{n=2}^\infty z^n a_n, \qquad a_n\in \mathbb C.$$
As in Section 3, with a  canonical  imbedding $\mathbb C\subset \mathbb H$,  we can construct a natural slice regular extension of $F$ to $\mathbb B$ via
$$f(q)=q+\sum_{n=2}^\infty q^n a_n, \qquad q\in \mathbb B.$$
It is evident that
 $f$ is a slice regular function on the open unit ball $\mathbb B=B(0,1)$ such that its restriction $\left.f\right|_{\mathbb D}=F$  is injective and satisfies that $F(\mathbb D)\subseteq \mathbb C $. Clearly,  $f(0)=0$ and $f'(0)=1$.
 Thus $f$ satisfies all the  assumptions of Theorem
\ref{th:DG-theorem153} and this results in the following theorem.

\begin{theorem}\label{main-thm-quarternion} Let  $F: \mathbb D\rightarrow \mathbb C$ be a  univalent  function on  $\mathbb D$ such that $F(0)=0$ and $F'(0)=1$, and let
$f:\mathbb B\rightarrow \mathbb H$ be the slice regular extension of $F$. Then for all $q\in\mathbb B,$ the following inequalities hold:
\begin{eqnarray*}\label{eq:12}
\frac{|q|}{(1+|q|)^2}\leq |f(q)|\leq \frac{|q|}{(1-|q|)^2};
\end{eqnarray*}
\begin{eqnarray*}
\frac{1-|q|}{(1+|q|)^3}\leq |f'(q)|\leq \frac{1+|q|}{(1-|q|)^3} ;
\end{eqnarray*}\label{eq:11}
\begin{eqnarray*}\label{eq:12}
\frac{1-|q|}{1+|q|}\leq \big|qf'(q)\ast f^{-\ast}(q)\big|\leq \frac{1+|q|}{1-|q|}.
\end{eqnarray*}
Moreover, equality holds for one of these six inequalities at some point $q_0\in \mathbb B\setminus \{0\}$ if and only if
$$f(q)=q(1-qe^{i\theta})^{-\ast2},\qquad\forall \,\,q \in \mathbb B.$$
\end{theorem}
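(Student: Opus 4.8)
The plan is to obtain Theorem \ref{main-thm-quarternion} as an immediate specialization of Theorem \ref{th:DG-theorem153}, exactly paralleling how Theorem \ref{main-thm-Clifford} was deduced from Theorem \ref{th:DG-theorem1} in Section 3. All the analytic content---the convex combination identity of Lemma \ref{eq:modulus relation151} together with the classical one-variable estimates of Theorem \ref{Th:dg-theorem}---has already been packaged into Theorem \ref{th:DG-theorem153}, so the only task is to verify that the slice regular extension $f$ of $F$ meets its hypotheses.

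First I would fix $I=i\in\mathbb S$, the image of the complex unit under the canonical imbedding $\mathbb C\hookrightarrow\mathbb H$ used to build $f(q)=q+\sum_{n=2}^\infty q^n a_n$. With this choice the slice $\mathbb C_I$ is identified with $\mathbb C$ and the disc $\mathbb B_I=\mathbb B\cap\mathbb C_I$ with $\mathbb D$, so that the restriction $f_I$ coincides with $F$. The univalence of $F$ on $\mathbb D$ then yields the injectivity of $f_I$ on $\mathbb B_I$, while $F(\mathbb D)\subseteq\mathbb C$ translates into $f(\mathbb B_I)\subseteq\mathbb C_I$. Finally, the normalization $F(0)=0$, $F'(0)=1$ passes directly to $f(0)=0$, $f'(0)=1$ through the shared Taylor coefficients. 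Thus every hypothesis of Theorem \ref{th:DG-theorem153} is satisfied, and the three pairs of inequalities follow verbatim for all $q\in\mathbb B$.

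For the rigidity statement I would again invoke the equality clause of Theorem \ref{th:DG-theorem153}: if equality is attained at some $q_0\in\mathbb B\setminus\{0\}$ in any one of the six inequalities, then necessarily $f(q)=q(1-qe^{I\theta})^{-\ast2}$, and conversely this function realizes equality by a direct computation. Since the preserved slice here is $\mathbb C_I$ with $I=i$, the rotation factor $e^{I\theta}$ is literally $e^{i\theta}$, which recovers the form asserted in the theorem. I do not expect any genuine obstacle, as the entire substance is inherited from Theorem \ref{th:DG-theorem153}; the only point demanding care is the bookkeeping identification of the distinguished slice $\mathbb C_i$ with the complex plane, ensuring simultaneously that $f_I=F$ and that the extremal function's rotation parameter matches the one written in the statement.
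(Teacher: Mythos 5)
Your proposal is correct and follows essentially the same route as the paper: the paper likewise constructs the slice regular extension $f(q)=q+\sum_{n=2}^\infty q^n a_n$ via the canonical imbedding $\mathbb C\subset\mathbb H$, observes that $f_I=F$ is injective with $f(\mathbb B_I)\subseteq\mathbb C_I$ and $f(0)=0$, $f'(0)=1$, and then invokes Theorem \ref{th:DG-theorem153}, equality case included. Your extra care about identifying $e^{I\theta}$ with $e^{i\theta}$ on the distinguished slice is exactly the (implicit) bookkeeping in the paper's argument.
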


Next we  digress to Koebe one-quarter theorem for slice regular functions  on the open unit ball $\mathbb B\subset \mathbb H$. We recall the following definition (see \cite[Definition 7.5]{GSS}).
\begin{definition}
Let $f$ be  a slice regular function on a symmetric slice domain $\Omega\subset \mathbb H$.  The \textit{degenerate set} of $f$ is defined to be the union $D_f$ of the $2$-dimensional spheres $S=x+y\mathbb S$ (with $y\neq 0$) such that $\left.f\right|_S$ is constant.
\end{definition}

Now as a direct consequence of the open mapping theorem and the first inequality in $(\ref{eq:111})$, we have the following result, which is a generalization of
\cite[Theorem 3.11 (1)]{GGCO}.

\begin{theorem}[Koebe One-Quarter Theorem]\label{th:Koebe-theorem}
Let $f$ be a slice regular function on   $\mathbb B$ such that its restriction $f_I$ to $\mathbb B_I$ is injective and $f(\mathbb B_I)\subseteq \mathbb C_I $ for some $I\in \mathbb S $. If $f(0)=0$ and $f'(0)=1$, then it holds that
$$B(0,\frac14)\subset f(\mathbb B).$$
\end{theorem}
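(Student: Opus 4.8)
The plan is to combine the open mapping theorem with the lower growth bound already established. The key observation is that $f$ maps $\mathbb{B}$ onto an open set containing the origin, and we need to show this image contains a fixed ball of radius $\frac{1}{4}$. First I would invoke the open mapping theorem for slice regular functions on symmetric slice domains, which guarantees that $f(\mathbb{B})$ is open, provided $f$ is not degenerate (i.e., $D_f$ is handled appropriately). Since $f'(0)=1$ and $f_I$ is injective, $f$ is nonconstant, so its image is an open subset of $\mathbb{H}$ containing $f(0)=0$.

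The heart of the argument is a boundary-distance estimate. Suppose $w_0 \in \partial f(\mathbb{B})$ is a boundary point of the image closest to the origin, so that $|w_0| = \mathrm{dist}(0, \partial f(\mathbb{B}))$ and the ball $B(0,|w_0|)$ is contained in $f(\mathbb{B})$. I would choose a sequence $q_n \in \mathbb{B}$ with $f(q_n) \to w_0$; since $w_0$ is a boundary value not attained in the interior, the sequence $q_n$ must approach $\partial\mathbb{B}$, i.e. $|q_n| \to 1$. Applying the left-hand inequality in $(\ref{eq:111})$, namely
$$\frac{|q_n|}{(1+|q_n|)^2} \leq |f(q_n)|,$$
and taking the limit as $|q_n|\to 1$, the left-hand side tends to $\frac{1}{4}$, yielding $|w_0| \geq \frac{1}{4}$. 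Hence $B(0,\frac{1}{4}) \subseteq B(0,|w_0|) \subseteq f(\mathbb{B})$.

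The main obstacle I anticipate is justifying the reduction to the one-slice growth estimate at boundary points of the full image in $\mathbb{H}$, rather than on a single slice $\mathbb{B}_I$. The growth inequality $(\ref{eq:111})$ is genuinely a statement valid for all $q\in\mathbb{B}$ by Lemma \ref{eq:modulus relation151}, so the bound $|f(q_n)|\geq \frac{|q_n|}{(1+|q_n|)^2}$ applies directly to the approximating sequence regardless of which slices the $q_n$ lie in; this is precisely why the convex combination identity is essential here. A subtler point is ensuring that the nearest boundary point $w_0$ is indeed a limit of interior images with $|q_n|\to 1$ and not merely a limit along a sequence clustering at an interior point (which cannot happen, since at an interior accumulation point $q^*$ continuity would force $f(q^*)=w_0$ to be an interior image, contradicting $w_0\in\partial f(\mathbb{B})$ together with openness). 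One must also confirm that the degenerate set $D_f$ causes no difficulty: by Proposition \ref{injectivity} the restriction $f_J$ is injective on every slice, so $f$ collapses no two-sphere to a point on any slice, and the open mapping theorem applies cleanly. Once these points are secured, the estimate follows immediately and the radius $\frac{1}{4}$ is sharp, being attained in the limit by the Koebe function $f(q)=q(1-qe^{I\theta})^{-\ast 2}$.
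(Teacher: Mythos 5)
Your proof is correct and takes essentially the same approach as the paper: the open mapping theorem (applicable because slice injectivity forces the degenerate set $D_f$ to be empty) combined with the lower bound in $(\ref{eq:111})$ to conclude that $\partial f(\mathbb B)$ lies outside $B(0,\frac14)$. Your boundary-distance and approximating-sequence argument merely spells out in detail the step the paper states in one line.
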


\begin{proof}
By assumption, the degenerate set $D_f$ of $f$ is empty. Then $f$ is open by the open mapping theorem (see \cite[Theorem 7.7]{GSS}). This together with the first inequality in $(\ref{eq:111})$ shows that the image set $f(\mathbb B)$, containing the origin $0$, is an open subset  of $\mathbb H$, whose boundary $\partial f(\mathbb B)$ lies outside of the ball $B(0,1/4)$. Indeed, for each point $w\in \partial f(\mathbb B)$, there exists a sequence $\{q_n\}_{n=1}^{\infty}$ in $\mathbb B$ such that $\lim_{n\rightarrow \infty}f(q_n)=w$. By passing to a subsequence, we may assume that  the sequence $\{q_n\}_{n=1}^{\infty}$ itself converges to one point, say $q_{\infty}\in \overline{\mathbb B}$. By the openness of $f$, $q_{\infty}$ must lie on the boundary $\partial \mathbb B$. Thus in view of the first inequality in $(\ref{eq:111})$,
$$|w|=\lim_{n\rightarrow \infty}|f(q_n)|\geq\lim_{n\rightarrow \infty}\frac{|q_n|}{(1+|q_n|)^2}=\frac14.$$
Consequently, $f(\mathbb B)$ must contain the ball $B(0,1/4)$. This completes the proof.
\end{proof}

Let $\mathcal{SR}(\mathbb B)$ denote the set of slice regular functions on the open unit ball $\mathbb B\subset\mathbb H$.
We define
$$\mathcal{S}:=\Big\{f\in \mathcal{SR}(\mathbb B):\exists\ I\in\mathbb S \ \mbox{such that} \ f_I \ \mbox{is injective and}\  f_I(\mathbb B_I)\subseteq \mathbb C_I\Big\}$$
and
$$\mathcal{S}_0:=\Big\{f\in \mathcal{S}: f(0)=0,\, f'(0)=1\Big\}.$$
For each $f\in \mathcal{S}_0$, we use $r_0(f)$ to denote the radius of the smallest ball $B(0,r)$ contained in $f(\mathbb B)$. Also for every $\theta\in \mathbb R$ and every $I\in\mathbb S$, denote by $k_{I, \theta}$ the slice regular function given by
\begin{equation}\label{Extension of Koebe functions}
k_{I, \theta}(q)=q(1-qe^{I\theta})^{-\ast2},\quad\forall\,\, q \in \mathbb B,
\end{equation}
which obviously belongs to the class $\mathcal{S}_0$. The image set of the unit disc $\mathbb B_I$ under $k_{I, \theta}$ is exactly  the complex plane except for a radial slit from $\infty$ to $-e^{I\theta}/4$. This fact together with Theorem $\ref{th:Koebe-theorem}$ gives the following result.

\begin{theorem}
With notations as above, the following statements hold:
\begin{enumerate}[leftmargin=1.7pc, parsep=4pt,label=(\roman*)]
 \item [{\rm{(i)}}] for each $f\in \mathcal{S}_0$,
  $$r_0(f)\geq \frac14$$
  with equality if and only if $f=k_{I, \theta}$ for some $I\in\mathbb S$ and some $\theta \in \mathbb R$;
  \item [{\rm{(ii)}}] $$\bigcap_{f\in \mathcal{S}_0}f(\mathbb B)=B(0,\frac14).$$
\end{enumerate}
\end{theorem}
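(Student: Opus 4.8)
The plan is to reduce both statements to the classical Koebe one-quarter theorem on the preserved slice $\mathbb C_I$. The backbone of the argument is the \emph{slice-image identity}
$$f(\mathbb B)\cap \mathbb C_I = F(\mathbb B_I),\qquad F:=f_I,$$
valid for every $f\in\mathcal S_0$ preserving $\mathbb C_I$, together with its consequence $r_0(f)=\rho(F)$, where $\rho(F):=\mathrm{dist}\big(0,\partial F(\mathbb B_I)\big)$ is the planar covering radius of the univalent image $F(\mathbb B_I)$. Once these are available, part (i) and part (ii) follow from the sharpness of the classical estimate.

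For part (i), the inequality $r_0(f)\geq\frac14$ is exactly Theorem \ref{th:Koebe-theorem}. To treat the equality case I would first prove the slice-image identity. Since $f_I$ is injective, $f$ is never constant on a sphere $x+y\mathbb S$ with $y\neq0$ (constancy there would force $f(x+yI)=f(x-yI)$), so the degenerate set $D_f$ is empty. Now if $q=x+yL\in\mathbb B$ with $L\neq\pm I$ and $y\neq0$ satisfies $f(q)\in\mathbb C_I$, the Representation Formula (Theorem \ref{eq:formula}, in its quaternionic form) gives $f(q)=P-LI\,Q$ with $P,Q\in\mathbb C_I$ and $Q=\tfrac12\big(f(x+yI)-f(x-yI)\big)$; expanding $LI\,Q$ in an orthonormal frame adapted to $I$ shows that its components transverse to $\mathbb C_I$ vanish only when $Q=0$, i.e.\ $f(x+yI)=f(x-yI)$, contradicting $D_f=\emptyset$. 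Hence values of $f$ lying in $\mathbb C_I$ can only arise from arguments in the slice, which is the identity. Next I would prove $r_0(f)=\rho(F)$: the bound $r_0(f)\leq\rho(F)$ is immediate, since a nearest omitted value of $F$ in $\mathbb C_I$ (at distance $\rho(F)$) is then omitted by, and lies on the boundary of, $f(\mathbb B)$; conversely, taking a boundary value $w_0$ of $f(\mathbb B)$ with $|w_0|=r_0(f)=\lim|f(q_n)|$ and $|q_n|\to1$, Corollary \ref{eq:modulus relation152} produces a slice point $z_n'\in\mathbb B_I$ with $|F(z_n')|\leq|f(q_n)|$ and $|z_n'|=|q_n|\to1$, whose limit is a boundary value of $F(\mathbb B_I)$ of modulus $\leq r_0(f)$, forcing $\rho(F)\leq r_0(f)$. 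With $r_0(f)=\rho(F)$ in hand, the classical Koebe theorem and its well-known rigidity (the equality case of Theorem \ref{Th:dg-theorem}, cf.\ \cite{Duren}) give $r_0(f)=\frac14$ if and only if $F$ is a rotation of the Koebe function, that is, if and only if $f=\mathrm{ext}(F)=k_{I,\theta}$.

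For part (ii), the inclusion $B(0,\frac14)\subseteq\bigcap_f f(\mathbb B)$ is once more Theorem \ref{th:Koebe-theorem}. For the reverse inclusion I fix $w$ with $|w|\geq\frac14$ and exhibit a single $f\in\mathcal S_0$ omitting it: writing $w\in\mathbb C_J$ for a suitable $J\in\mathbb S$ and using the stated fact that $k_{J,\theta}(\mathbb B_J)$ is $\mathbb C_J$ minus the radial slit with tip $-e^{J\theta}/4$, I choose $\theta$ so that $w$ lies on that slit, which is possible because as $\theta$ varies the slits sweep out exactly the set $\{\,\zeta\in\mathbb C_J:|\zeta|\geq\frac14\,\}$. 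By the slice-image identity applied to $k_{J,\theta}$, this omitted slice value is omitted by the full image $k_{J,\theta}(\mathbb B)$, so $w\notin\bigcap_f f(\mathbb B)$, and intersecting over $f$ yields $\bigcap_f f(\mathbb B)\subseteq B(0,\frac14)$.

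The main obstacle is precisely the slice-image identity $f(\mathbb B)\cap\mathbb C_I=F(\mathbb B_I)$: a priori the quaternionic image could recover extra points of the preserved slice from genuinely off-slice arguments, and excluding this is the content of the representation-formula computation combined with the vanishing of $D_f$. Everything else — the covering inequality, the reduction $r_0(f)=\rho(F)$ via Corollary \ref{eq:modulus relation152}, and the classical rigidity — is then routine.
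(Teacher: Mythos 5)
Your proposal is correct, and it takes a genuinely different --- and more self-contained --- route than the paper. The paper only argues the extremal case explicitly: assuming $r_0(f)=\tfrac14$, it combines the boundary argument from the proof of Theorem \ref{th:Koebe-theorem} with inequality (\ref{min-ineq}) (minimum replaced by infimum) to produce boundary values of $f_{I}(\mathbb B_{I})$ of modulus tending to $\tfrac14$; since the classical Koebe theorem already gives the opposite bound $\rho(f_I)\geq\tfrac14$ for the planar covering radius, it concludes $\rho(f_I)=\tfrac14$ and invokes classical rigidity. That one-directional argument never needs your slice-image identity $f(\mathbb B)\cap\mathbb C_I=F(\mathbb B_I)$, which is why the paper's proof is so short. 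By contrast, you prove the identity via the Representation Formula (the component of $LI\,Q$ transverse to $\mathbb C_I$ vanishes only when $Q=0$, which injectivity of $f_I$ excludes) and deduce the exact equality $r_0(f)=\rho(F)$, with the other direction supplied by Corollary \ref{eq:modulus relation152} just as in the paper. What this buys you is precisely the steps the paper waves off as ``clear'': the converse implication in (i), namely that $r_0(k_{I,\theta})$ actually \emph{equals} $\tfrac14$, and the inclusion $\bigcap_{f\in\mathcal S_0}f(\mathbb B)\subseteq B(0,\tfrac14)$ in (ii), both of which genuinely require knowing that the slit omitted by $(k_{J,\theta})_J$ in $\mathbb C_J$ cannot be recovered from off-slice arguments --- a priori a real possibility, and exactly the obstacle you identify. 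So the paper's route buys brevity by leaning on the classical lower bound and leaving the ``only if'' half of the extremal statement and part (ii) implicit, while yours buys completeness and an independently useful structural fact at the cost of one extra lemma.
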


\begin{proof}
We only prove ${\rm{(i)}}$. It suffices to consider the extremal case, since the remaining is clear. If $r_0(f)=1/4$, from the proof of Theorem \ref{th:Koebe-theorem} and Inequality $(\ref{Inf-ineq})$,  we conclude that there exists some $I_0\in\mathbb S$ such that $1/4$ is exactly the radius of the smallest disc $\mathbb B_{I_0}(0,r)$ contained in the image set $f_{I_0}(\mathbb B_{I_0})$  of the unit disk $\mathbb B_{I_0}$ under the classical univalent   function $f_{I_0}:\mathbb B_{I_0}\rightarrow \mathbb C_{I_0}$. This is possible only if $f=k_{I_0, \theta}$ for some $\theta\in \mathbb R$ (see the proof of \cite[Theorem 1.1.5]{GG} or \cite[Theorem 2.3]{Duren}). Now the proof is complete.
\end{proof}

\begin{remark}
Two remarks are in order:
\begin{enumerate}[leftmargin=1.7pc, parsep=4pt,label=(\roman*)]
 \item [{\rm{(i)}}]
It is noteworthy here that Gal et al. \cite{GGCO} dealt with the growth, distortion and covering theorems for \textit{slice preserving} and \textit{injective} slice regular functions on the open unit ball $\mathbb B\subset \mathbb H$ with certain normalized conditions. More precisely, they focused on \textit{injective} slice functions $f$ on $\mathbb B$ of the form
 $$f(q)=q+\sum_{n=2}^\infty q^n a_n$$
 with $\{a_n\}_{n\geq2}$ being a sequence of \textit{real numbers}; see \cite[Theorem 3.11]{GGCO} for details.  While, in the present paper we consider   slice regular functions $f(q)=q+\sum_{n=2}^\infty q^n a_n$ on $\mathbb B$ for which there exists \textit{some} $I\in\mathbb S$ such that the restriction \textit{$f_I$ is injective} and  $\{a_n\}_{n\geq2}$ is  a sequence of \textit{numbers in the complex plane $\mathbb C_I$ determined  by $I$.}
 Thus our result  properly includes the former case. Moreover, our approach to the Koebe type one-quarter theorem (Theorem  \ref{th:Koebe-theorem}), which can be specialized to the complex case,  depends only on the open mapping theorem and the first inequality in $(\ref{eq:111})$, and does not involve  compositions of functions.  We refer the reader to  \cite[p.14]{GG}  and to \cite[p.31]{Duren} for a standard proof of the classical Koebe type one-quarter theorem for univalent functions.

 \item [{\rm{(ii)}}] Functions $k_{I, \theta}$ of the form in $(\ref{Extension of Koebe functions})$ are specific examples in $\mathcal{S}_0$.  In view of Theorem \ref{th:Koebe-theorem},   the image of $\mathbb B$ under the function $k_{I, \pi/2}$ contains the open ball $B(0,1/4)$. However,  it seems not so easy to directly deduce this fact from the classical complex result, without using  the open mapping theorem and the first inequality in $(\ref{eq:111})$.
\end{enumerate}
\end{remark}

The following proposition  is the quaternionic version of Proposition \ref{injectivity} for slice regular functions.

\begin{proposition}\label{injectivity01}
Let $f$ be a slice regular function on a symmetric slice domain  $\Omega \subseteq \mathbb H$ such that its restriction $f_I$ to $\Omega_I$ is injective and $f(\Omega_I)\subseteq \mathbb C_I $ for some $I\in \mathbb S $. Then its restriction $f_J:\Omega_J\rightarrow \mathbb H$ is also injective for every $J\in \mathbb S$.
\end{proposition}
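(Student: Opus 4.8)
The plan is to adapt the proof of Proposition~\ref{injectivity} essentially verbatim to the quaternionic setting, since the only tool it uses---the representation formula---remains valid for slice regular functions on symmetric slice domains in $\mathbb H$ (the analog of Theorem~\ref{eq:formula}). Suppose $p=\alpha+\beta J$ and $q=\gamma+\delta J$ lie in $\Omega_J$ with $f(p)=f(q)$; the goal is to conclude $p=q$. When $J=\pm I$ this is immediate from the injectivity of $f_I$, so I assume $J\neq\pm I$ and set $z=\alpha+\beta I$ and $w=\gamma+\delta I$, together with their conjugates $\bar z,\bar w$, on the distinguished slice $\mathbb C_I$.

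First I would apply the representation formula to both $p$ and $q$, writing $2f(p)=\big(f(z)+f(\bar z)\big)-JI\big(f(z)-f(\bar z)\big)$ and the analogous expression for $f(q)$. Subtracting and using $f(\Omega_I)\subseteq\mathbb C_I$, the identity $f(p)=f(q)$ becomes $P=JI\,Q$, where $P=\big(f(z)+f(\bar z)\big)-\big(f(w)+f(\bar w)\big)$ and $Q=\big(f(z)-f(\bar z)\big)-\big(f(w)-f(\bar w)\big)$ both belong to $\mathbb C_I$.

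The crux---and the step I expect to be the only genuine obstacle---is to separate this single quaternionic identity into the two $\mathbb C_I$-valued relations $P=0$ and $Q=0$. The point is that for $J\neq\pm I$ the element $JI$ has a nontrivial component orthogonal to the plane $\mathbb C_I$: writing $J=\cos\theta\,I+\sin\theta\,M$ with $M\in\mathbb S$, $M\perp I$, and $\sin\theta\neq0$, one obtains $JI=-\cos\theta+\sin\theta\,(MI)$ with $MI\perp\mathbb C_I$. Expanding $JI\,Q$ for $Q=c+dI\in\mathbb C_I$ and comparing the part lying in $\mathbb C_I$ with the part in its orthogonal complement forces first $Q=0$ (that is, $c=d=0$) and then $P=0$. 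Equivalently, this is the assertion that $\{1,J\}$ is linearly independent over $\mathbb C_I$, i.e. that $\mathbb H=\mathbb C_I\oplus\mathbb C_I J$ as a real vector space whenever $J\neq\pm I$.

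From $P=Q=0$ I would recover $f(z)+f(\bar z)=f(w)+f(\bar w)$ and $f(z)-f(\bar z)=f(w)-f(\bar w)$, whence $f(z)=f(w)$. The injectivity of $f_I$ then yields $z=w$, i.e. $\alpha=\gamma$ and $\beta=\delta$, so that $p=q$ and $f_J$ is injective, as claimed. No genuinely new difficulty arises beyond the quaternionic bookkeeping in the separation step, since everything else mirrors the Clifford-algebra argument of Proposition~\ref{injectivity}.
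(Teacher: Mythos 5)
Your proposal is correct and takes essentially the same approach as the paper: the paper offers no separate proof of Proposition \ref{injectivity01}, presenting it as the quaternionic version of Proposition \ref{injectivity}, whose argument (representation formula, subtraction, separation into two $\mathbb C_I$-valued equations, then injectivity of $f_I$) is exactly what you reproduce. Your explicit check that $P=JI\,Q$ with $P,Q\in\mathbb C_I$ and $J\neq\pm I$ forces $Q=0$ and then $P=0$ correctly fills in the detail the paper leaves implicit in the phrase ``$1$ and $J$ are linearly independent on $\mathbb C_I$.''
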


\begin{remark}

Let $f$ be as described in Theorem \ref{th:DG-theorem153}. Then according to the preceding proposition, $f_J:\mathbb B_J\rightarrow \mathbb H$ is injective for every $J\in \mathbb S$. It is well worth knowing whether $f:\mathbb B\rightarrow\mathbb H$ is injective. If it is indeed the case, together with the first inequality in $(\ref{eq:111})$ and invariance of domain theorem, it would provide an alternative approach to Theorem \ref{th:Koebe-theorem}.
\end{remark}

\section{Concluding remarks}\label{Concluding remarks}
As pointed out in Remark \ref{remark}, the counterpart of the convex combination identity $(\ref{eq: modulus12})$ in Lemma \ref{eq:norm} also holds for slice regular functions defined on octonions  or more general real alternative algebras under the extra assumption that $f$ preserves at least one slice. Therefore some of the results given in the preceding sections can be easily generalized by slight modification to these new settings. Finally, we conclude with an open question connected with the  subject of the present paper.

Recall that $\mathcal{SR}(\mathbb B)$ is the set of slice regular functions on the open unit ball $\mathbb B\subset\mathbb H$.
We denote
$$\mathcal{SR}_0(\mathbb B):=\Big\{f\in \mathcal{SR}(\mathbb B): f(0)=0,\, f'(0)=1\Big\}$$
and
$$\mathcal{S}_0:=\Big\{f\in \mathcal{SR}_0(\mathbb B):  \exists\ I\in\mathbb S \ \mbox{such that} \ f_I \ \mbox{is injective and}\  f_I(\mathbb B_I)\subseteq \mathbb C_I\Big\}.$$

$\mathbf{Open \;\;question:}${\footnote{Very recently, Xu has found a negative number to this question, see \cite[Example 3.1,  Theorems 5.1 and 5.6]{Xu2016} for details.}} Is the  class $\mathcal{S}_0$    the largest subclass of $\mathcal{SR}_0(\mathbb B)$ in which the corresponding growth, distortion  and covering  theorems hold?

\section*{Acknowledgments}
The authors  thank sincerely the anonymous referee for his/her careful reading of this paper and several  valuable suggestions and comments, which have significantly improved the presentation of this paper.

\bibliographystyle{amsplain}

\end{document}